\theoremstyle{definition}
\newtheorem{theorem}{Theorem}[section]
\newtheorem{definition}[theorem]{Definition}
\newtheorem{proposition}[theorem]{Proposition}
\newtheorem{lemma}[theorem]{Lemma}
\newtheorem{corollary}[theorem]{Corollary}
\newtheorem{remark}[theorem]{Remark}
\newtheorem{example}[theorem]{Example}
\numberwithin{equation}{section}
\newtheorem{problem}{Problem}
\newtheorem{acknowledgements}{Acknowledgement}
\def\<{\left < }
\def\>{\right >}
\def\({\left ( }
\def\){\right )}
\def\C2{${\bf C}^2$}
\begin{document}

\title[Exist. and Nonexist. Warped Product Subman. of Almost Contact Man. \; \;  ]{Existence and Nonexistence of Warped Product Submanifolds of Almost Contact Manifolds} 

\author[A. Mustafa]{Abdulqader Mustafa}
\address{Department of Mathematics, Faculty of Arts and Science, Palestine Technical University, Kadoorei, Tulkarm, Palestine}
\email{abdulqader.mustafa@ptuk.edu.ps}
\author[C. $\ddot O$zel]{Cenap $\ddot O$zel}
\address{Department of Mathematics, Faculty of Science, King Abdulaziz University, 21589 Jeddah, Saudi Arabia}
\email{cozel@kau.edu.sa}
\;
\author[A. Pigazzini]{Alexander{\;}Pigazzini}
\address{Mathematical and Physical Science Foundation, 4200 Slagelse, Denmark}
\email{pigazzini@topositus.com}
\author[R. Pincak]{Richard Pincak}
\address{Institute of Experimental Physics, Slovak Academy of Sciences, Kosice, Slovak Republic}
\email{pincak@saske.sk}

\maketitle
\begin{abstract}
This paper has two goals; the first is to generalize results for the existence and nonexistence of warped product submanifolds of almost contact manifolds, accordingly a self-contained reference of such submanifolds is offered to save efforts of potential research. Most of the results of this paper are general and decisive enough to generalize both discovered and not discovered results. Moreover, a discrete example of contact $CR$-warped product submanifold in Kenmotsu manifold is constructed. For further research direction, we addressed a couple of open problems arose from the results of this paper.

\noindent{\it{AMS Subject Classification (2010)}}: {53C15; 53C40; 53C42; 53B25}

\noindent{\it{Keywords}}: {  
 Contact $CR$-warped products; Sasakian manifolds; Kenmotsu manifolds;  cosymplectic manifolds; nearlt trans-Sasakian manifolds; general warped product; doubly warped product, second fundamental form; totally geodesic}

\end{abstract}


\sloppy
\section{Introduction}
Warped products have been playing some important roles in the theory of general relativity as they have been providing the best mathematical models of our universe for now; that is, the warped product scheme was successfully applied in general relativity and semi-Riemannian geometry in order to build basic cosmological models for the universe. For instance, the Robertson-Walker spacetime, the Friedmann cosmological models and the standard static spacetime are given as warped product manifolds. For more cosmological applications, warped product manifolds provide excellent setting to model spacetime near black holes or bodies with large gravitational force. For example, the relativistic model of the Schwarzschild spacetime that describes the outer space around a massive star or a black hole admits a warped product construction \cite{iijj77}.  

In an attempt to construct manifolds of negative curvatures, R.L. Bishop and O'Neill \cite{ddyy7} introduced the notion of {\it warped product manifolds} as follows: Let $N_1$ and $N_2$ be two Riemannian manifolds with Riemannian metrics $g_{N_1}$ and $g_{N_2}$, respectively, and $f>0$ a $C^\infty$ function on $N_1$. Consider the product manifold $N_1\times N_2$ with its projections $\pi_1:N_1\times N_2\mapsto N_1$ and $\pi_2:N_1\times N_2\mapsto N_2$. Then, the {\it warped product} $\tilde M^m= N_1\times _fN_2$ is the Riemannian manifold $N_1\times N_2=(N_1\times N_2, \tilde g)$ equipped with a Riemannian structure such that 
 $\tilde g=g_{N_1} + f^2 g_{N_2}$.

A warped product manifold $\tilde M^m=N_1\times _fN_2$ is said to be {\it trivial} if the warping function $f$ is constant. For a nontrivial warped product $N_1\times _fN_2$, we denote by $\mathfrak{D}_1$ and $\mathfrak{D}_2$ the distributions given by the vectors tangent to leaves and fibers, respectively. Thus, $\mathfrak{D}_1$ is obtained from tangent vectors of $N_1$ via the horizontal lift and $\mathfrak{D}_2$ is obtained by tangent vectors of $N_2$ via the vertical lift. 

Since our goal to search about existence and nonexistence of warped product submanifolds in almost contact manifolds, we hypothesize the following two problems. The first is for single warped products
\begin{problem}\label{prob9}
Prove existence or nonexistence of single warped product submanifolds of almost contact manifolds.
\end{problem}

The second problem is for doubly warped products

\begin{problem}\label{prob10}
Prove existence or nonexistence of doubly warped product submanifolds of almost contact manifolds.
\end{problem}

The present paper is organized as follows: After the introduction, we present in Section 2, the preliminaries, basic definitions and formulas. In Section 3, we provide basic results, which are necessary and useful to the next section. In Section 4, which is the main section, we generalize theorems for existence and nonexistence warped product submanifolds for single and doubly warped product submanifolds in almost contact  manifolds. Moreover, in the current section we discuss the contact $CR$-warped product submaifolds in almost contact manifolds and construct an example of both types of contact $CR$-warped product submanifolds of Kenmotsu manifolds. In the final section, we address two open problems related to the obtained results in this paper. 

\section{Preliminaries}

At first, let us recall the following important two facts regarding Riemannian submanifolds, \cite{was}.
\begin{definition}\label{dfffff5555}
Let $M^n$ and $\tilde M^m$ be differentiable manifolds. A differentiable mapping $\varphi: M^n\longrightarrow \tilde M^m$ is said to be an {\it immersion} if $d\varphi_x: T_xM^n\rightarrow T_{\varphi (x)} \tilde M^m$ is injective for all $x\in M^n$. If, in addition, $\varphi$ is a homeomorphism onto $\varphi(M^n)\subset \tilde M^m$, where $\varphi(M^n)$ has the subspace topology induced from $\tilde M^m$, we say that $\varphi$ is an {\it embedding}. If $M^n\subset \tilde M^m$ and the inclusion $\boldsymbol{i}: M^n\subset \tilde M^m$ is an embedding, we say that $M^n$ is a submanifold of $\tilde M^m$.
\end{definition}
It can be seen that if $\varphi: M^n\rightarrow \tilde M^m$ is an immersion, then $n\le m$; the difference $m-n$ is called the {\it codimension} of the immersion $\varphi$.

For most local questions of geometry, it is the same to work with either immersions or embeddings. This comes from the following proposition which shows that every immersion is locally (in a certain sense) an embedding.

\begin{proposition}\label {3g9stg}
Let $\varphi: M^n\longrightarrow \tilde M^m$, $n\le m$, be an immersion of the differentiable manifold $M^n$ into the differentiable manifold $\tilde M^m$. For every point $x\in  M^n$, there exists a neighborhood ${\mathfrak u}$ of $x$ such that the restriction $\varphi | {\mathfrak u}\rightarrow \tilde M^m$ is an embedding.
\end{proposition}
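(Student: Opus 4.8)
The plan is to reduce the statement to the \emph{local canonical form} of an immersion, obtained from the inverse function theorem, in which $\varphi$ becomes the standard linear inclusion $\mathbb{R}^n\hookrightarrow\mathbb{R}^m$; once $\varphi$ has this form on a suitable chart, both requirements in Definition \ref{dfffff5555} — injectivity and being a homeomorphism onto the image endowed with the subspace topology — become immediate. So the whole argument splits into (i) producing the canonical form near $x$, and (ii) reading off the embedding property from it.

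For step (i), I would fix charts $(W,\psi)$ around $x$ and $(V,\chi_0)$ around $\varphi(x)$ with $\psi(x)=0$, $\chi_0(\varphi(x))=0$ and $\varphi(W)\subseteq V$, and pass to the coordinate representative $\hat\varphi=\chi_0\circ\varphi\circ\psi^{-1}\colon U\subseteq\mathbb{R}^n\to\mathbb{R}^m$, a smooth map with $\hat\varphi(0)=0$. Since $d\varphi_x$ is injective, $D\hat\varphi(0)$ has rank $n$, so after permuting the target coordinates we may assume that its first $n$ rows form an invertible matrix. Writing $\hat\varphi=(\hat\varphi_1,\hat\varphi_2)$ with $\hat\varphi_1\colon U\to\mathbb{R}^n$ and $\hat\varphi_2\colon U\to\mathbb{R}^{m-n}$, I would introduce the auxiliary map
\[ F(a,b)=\big(\hat\varphi_1(a),\,\hat\varphi_2(a)+b\big),\qquad a\in U,\ b\in\mathbb{R}^{m-n}. \]
Its Jacobian at $(0,0)$ is block triangular with invertible diagonal blocks $D\hat\varphi_1(0)$ and the identity, hence invertible; the inverse function theorem then makes $F$ a diffeomorphism of a neighborhood of $(0,0)$ onto an open set, and after shrinking $V$ to the open set $V'\ni\varphi(x)$ on which $F^{-1}\circ\chi_0$ is defined, the map $\chi:=F^{-1}\circ\chi_0$ is a new chart. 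By construction $\chi\circ\varphi\circ\psi^{-1}(a)=F^{-1}(\hat\varphi_1(a),\hat\varphi_2(a))=F^{-1}\big(F(a,0)\big)=(a,0)$, i.e. in the charts $\psi$ and $\chi$ the map $\varphi$ is exactly the slice $a\mapsto(a,0)$.

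For step (ii), shrink $W$ so that $\varphi(W)\subseteq V'$ and the above holds on all of $W$, and set $\mathfrak{u}:=W$. Injectivity of $\varphi|_{\mathfrak u}$ is clear, since $a\mapsto(a,0)$ is injective and $\psi,\chi$ are bijections. It remains to check that $\varphi|_{\mathfrak u}$ is a homeomorphism onto $\varphi(\mathfrak u)$ with the topology induced from $\tilde M^m$. In the chart $\chi$ the image $\chi(\varphi(\mathfrak u))$ is the slice $\{(a,0):a\in\psi(\mathfrak u)\}$, on which the linear projection $\pi\colon(v^1,\dots,v^m)\mapsto(v^1,\dots,v^n)$ is continuous and returns $a$; consequently $\psi^{-1}\circ\pi\circ\chi$ is a continuous map on the (open-in-$\tilde M^m$) set $V'$ whose restriction to $\varphi(\mathfrak u)$ is the inverse of $\varphi|_{\mathfrak u}$. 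Hence $\varphi|_{\mathfrak u}$ is continuous, injective, with continuous inverse for the subspace topology, which is exactly an embedding.

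The main obstacle I expect is step (i): extracting the canonical form honestly requires the inverse function theorem together with the right auxiliary map $F$, and one must be careful that the new chart $\chi$ is defined on a genuinely \emph{open} subset of $\tilde M^m$ containing $\varphi(x)$ — this forces the shrinking of $V$ to $V'$ and the corresponding shrinking of $W$. By contrast step (ii) is essentially topological bookkeeping: the only conceptual point is that an immersion may fail to be a \emph{global} embedding precisely because the subspace-topology condition can break globally, whereas the slice form guarantees it locally through the continuous projection $\pi$.
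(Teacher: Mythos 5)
Your proof is correct. Note that the paper itself gives no proof of this proposition: it is recalled as a known fact from Do Carmo's \emph{Riemannian Geometry} (reference \cite{was}), and your argument is exactly the standard one from that source --- reduce to the canonical slice form of an immersion via the auxiliary map $F(a,b)=(\hat\varphi_1(a),\hat\varphi_2(a)+b)$ and the inverse function theorem, then obtain the homeomorphism-onto-image property from the coordinate projection. One small point of hygiene: $\psi^{-1}\circ\pi\circ\chi$ is not defined on all of $V'$, but only on the open subset $(\pi\circ\chi)^{-1}\bigl(\psi(\mathfrak u)\bigr)$, since $\pi\circ\chi$ need not map all of $V'$ into $\psi(\mathfrak u)$; as $\varphi(\mathfrak u)$ is contained in that open subset, the restriction you use is still continuous and your conclusion is unaffected.
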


Now, we turn our attention to the differential geometry of the submanifold theory. First, let $M^n$ be $n$-dimensional Riemannian manifold isometrically immersed in an $m$-dimensional Riemannian manifold $\tilde M^m$. Since we are dealing with a local study, then, by Proposition \ref{3g9stg}, we may assume that $M^n$ is embedded in $\tilde M^m$. On this infinitesimal scale, Definition \ref{dfffff5555} guarantees that $M^n$ is a {\it Riemannian submanifold} of some nearby points in $\tilde M^m$ with induced Riemannian metric $g$. Then, {\it Gauss} and {\it Weingarten formulas} are, respectively, given by
\begin{equation}\label{3}
\tilde \nabla_X Y=\nabla_X Y+h(X,Y) 
\end{equation}
and
\begin{equation}\label{4}
\tilde\nabla_X\zeta=-A_\zeta X+\nabla^\perp_X\zeta 
\end{equation}
for all $X,Y\in \Gamma(TM^n)$ and $\zeta\in \Gamma (T^\perp M^n)$, where $\tilde\nabla$ and $\nabla$ denote respectively the Levi-Civita and the {\it induced} Levi-Civita connections on $\tilde M^m$ and $M^n$, and $\Gamma(TM^n)$ is the module of differentiable sections of the vector bundle $TM^n$. $\nabla^\perp$ is the {\it normal connection} acting on the normal bundle $T^\perp M^n$. 

Here, $g$ denotes the {\it induced Riemannian metric} from $\tilde g$ on $M^n$. For simplicity's sake, the inner products which are carried by $g$, $\tilde g$ or any other induced Riemannian metric are performed via $g$. However, most of the inner products which will be applied in this thesis are equipped with $g$, other situations are rarely considered.

Here, it is well-known that the {\it second fundamental form} $h$ and the {\it shape operator} $A_\zeta$ of $M^n$ are related by  
\begin{equation}\label{5}
g(A_\zeta X,Y)=g(h(X,Y),\zeta)
\end{equation}
for all $X,Y\in \Gamma(TM^n)$ and $\zeta\in \Gamma(T^\perp M^n)$, \cite{pom},~\cite{iijj77}.

Geometrically, $M^n$ is called a {\it totally geodesic} submanifold in $\tilde M^m$ if $h$ vanishes identically. Particularly, the {\it relative null space}, ${\mathcal N}_x$, of the submanifold $M^n$ in the Riemannian manifold $\tilde M^m$ is defined at a point $x\in M^n$ by \cite{aallr4} as
\begin{equation}\label{19}
{\mathcal N}_x=\{ X\in T_xM^n: h(X, Y)=0~~~ \forall~ Y\in T_xM^n\}.
\end{equation}

In a different line of thought, and for any $X\in \Gamma (TM^n)$, $\zeta\in \Gamma (T^\perp M^n)$ and a $(1,1)$ tensor field $\psi$ on $\tilde M^m$, we write
\begin{equation}\label{6}
\psi X=PX+FX,
\end{equation}
and
\begin{equation}\label{7}
\psi N=t\zeta+f\zeta,
\end{equation}
where $PX$, $t\zeta$ are the tangential components and $FX$, $f\zeta$ are the normal components of $\psi X$ and $\psi \zeta$, respectively, \cite{saw}. In the sake of following the common terminology, the tensor field $\psi$ is replaced by $J$ in almost Hermitian manifolds. However, the covariant derivatives of the tensor fields $\psi$, $P$ and $F$ are respectively defined as \cite{pom}
\begin{equation}\label{42}
(\tilde\nabla_X\psi)Y=\tilde\nabla_X\psi Y-\psi\tilde\nabla_XY,
\end{equation}
\begin{equation}\label{38}
(\tilde\nabla_XP)Y=\tilde\nabla_XPY-P\tilde\nabla_XY
\end{equation}
and
\begin{equation}\label{39}
(\tilde\nabla_XF)Y=\nabla_{X}^{\perp}FY-F\tilde\nabla_XY.
\end{equation}

Likewise, we consider a local field of orthonormal frames 
$\{e_1, \cdots , e_n, e_{n+1}, \cdots, e_m\}$ on $\tilde M^m$, such that, restricted to $M^n$, $\{e_1, \cdots , e_n\}$ are tangent to $M^n$ and $\{e_{n+1}, \cdots, e_m\}$ are normal to $M^n$. Then, the {\it mean curvature vector} $\vec H(x)$ is introduced as \cite{pom}, \cite{iijj77}
\begin{equation}\label{8}
\vec H(x)=\frac{1}{n} \sum_{i=1}^{n} h(e_i, e_i),
\end{equation}

On one hand, we say that $M^n$ is a {\it minimal submanifold} of $\tilde M^m$ if $\vec H=0$. On the other hand, one may deduce that $M^n$ is totally umbilical in $\tilde M^m$ if and only if $h(X,Y)=g(X,Y) \vec H$, for any $X,~Y\in \Gamma (TM^n)$ \cite{yyhh88}, where $H$ and $h$ are the mean curvature vector and the second fundamental form, respectively \cite{2233ee}.

For an odd dimensional real $C^\infty$ manifold $\tilde M^{2l+1}$, let $\phi$, $\xi$, $\eta$ and $\tilde g$ be respectively a $(1,~1)$ tensor field, a vector field, a $1$-form and a Riemannian metric on $\tilde M^{2l+1}$ satisfying
\begin{equation}\label{151}
\left.
\begin{aligned}
   \phi^2=-I+\eta\otimes\xi,~~~~~\phi\xi=0,~~~~~\eta\circ\phi=0,~~~~~\eta(\xi) = 1,\\ 
   \eta(X)=\tilde g(X, \xi),~~~~~\tilde g(\phi X, \phi Y)=\tilde g(X, Y)-\eta(X)\eta(Y),
\end{aligned}
\right\}
\end{equation}
for any $X,~Y\in \Gamma (T\tilde M^{2l+1})$. Then we call $(\tilde M^{2l+1}, \phi, \xi, \eta, \tilde g)$ an {\it almost contact metric manifold} and $(\phi, \xi, \eta, \tilde g)$ an {\it almost contact metric structure} on $\tilde M^{2l+1}$, see \cite{pom},~\cite{rash} and \cite{fottt}.

A fundamental $2$-form $\Phi$ is defined on $\tilde M^{2l+1}$ by $\Phi(X,Y)=\tilde g(\phi X,Y)$. An almost contact metric manifold $\tilde M^{2l+1}$ is called a contact metric manifold if $\Phi =\frac{1}{2} d\eta$. If the almost contact metric manifold $(\tilde M^{2l+1},\phi, \xi, \eta, \tilde g)$ satisfies $[\phi, \phi]+2d\eta\otimes \xi=0$, then $(\tilde M^{2l+1},\phi, \xi, \eta, \tilde g)$ turns out to be a {\it normal almost contact manifold}, where the Nijenhuis tensor is defined as
$$[\phi,\phi](X,Y)=[\phi X, \phi Y]+\phi^2[X,Y]-\phi [X,\phi Y]-\phi[\phi X, Y]~~~~~\forall~ X,~Y\in \Gamma(T\tilde M^{2l+1}).$$

For our purpose, we will distinguish four classes of almost contact metric structures; namely, Sasakian, Kenmotsu, cosymplectic and nearly trans-Sasakian structures. At first, an almost contact metric structure is  is said to be {\it Sasakian} whenever it is both contact metric and normal, equivalently \cite{wenn}
\begin{equation}\label{157}
(\tilde\nabla_X\phi)Y=-\tilde g(X,Y)\xi+\eta(Y)X.
\end{equation}

 An almost contact metric manifold $\tilde M^{2l+1}$ is called {\it Kenmotsu manifold} \cite{foss} if 
\begin{equation}\label{155}
(\tilde\nabla_X\phi)Y=\tilde g(\phi X, Y)\xi -\eta(Y)\phi X.
\end{equation}

In the case of killing almost contact structure tensors, consider a normal almost contact metric structure $(\phi, \xi,\eta,\tilde g)$ with both $\Phi$ and $\eta$ are closed. Then, such $(\phi, \xi,\eta,\tilde g)$ is called {\it cosymplectic} \cite{nasso}. Explicitly, cosymplectic manifolds are characterized by normality and the vanishing of Riemannian covariant derivative of $\phi$, i.e.,
\begin{equation}\label{152}
(\tilde\nabla_X\phi)Y=0.
\end{equation}
 Hereafter, we call the almost contact manifold $\tilde M^{2l+1}$ a {\it nearly cosymplectic } manifold if 
\begin{equation}\label{154}
(\tilde \nabla_X\phi)Y+(\tilde \nabla_Y\phi)X=0.
\end{equation}

Based on Gray-Hervella classification of almost Hermitian manifolds \cite{ogma}, an almost contact metric structure $(\phi , \xi , \eta , \tilde g)$ on $\tilde M^{2l+1}$ is called a trans-Sasakian structure \cite{zeedo} if $(\tilde M^{2l+1}\times \mathbb{R}, J, \tilde G)$ belongs to the class $W_4$ of their classification, where $J$ is the almost complex structure on $\tilde M^{2l+1}\times \mathbb{R}$ defined by
$$J(X, ad/dt)= \biggl(\phi X- a\xi, \eta (X)d/dt\biggr)$$
for all vector fields $X$ on $\tilde M^{2l+1}$ and smooth functions $a$ on $\tilde M^{2l+1}\times \mathbb{R}$, where $\tilde G$ is the product metric on $\tilde M^{2l+1}\times \mathbb{R}$. This may be expressed by the condition 
\begin{equation}\label{274}
(\tilde \nabla_X\phi)Y= \alpha \biggl(\tilde g(X,Y)\xi - \eta (Y) X\biggr) + \beta \biggl(\tilde g(\phi X, Y)\xi - \eta (Y) \phi X\biggr),
\end{equation}
for some smooth functions $\alpha$ and $\beta$ on $\tilde M^{2l+1}$, and we say that the trans-Sasakian structure is of type $(\alpha, \beta)$. From the above formula it follows that 
$$\tilde \nabla_X\xi= - \alpha \phi X + \beta \biggl(X- \eta (X) \xi \biggr).$$
Up to D. Chinea and C. Gonzalez classification of almost contact structures \cite{ches}, the class $C_6\otimes C_5$ coincides with the class of trans-Sasakian structure of type $(\alpha , \beta)$. Recently, J. C. Marrero proved that a trans-Sasakian manifold of dimension $\geq$5 is either $\alpha$-Sasakian, $\beta$-Kemnotsu or a cosymplectic manifold,  \cite{massa}.

In \cite{zeedo}, C. Gherghe introduced  nearly trans-Sasakian structure of type $(\alpha , \beta )$. An almost contact metric structure $(\phi , \xi , \eta ,\tilde g)$ on $\tilde M^{2l+1}$ is called a {\it nearly trans-Sasakian} structure (Mustafa et al., 2014 $\&$ 2015)   if 
\begin{equation}\label{261}
(\tilde \nabla _X \phi ) Y + (\tilde \nabla _Y \phi ) X= \alpha \biggl(2\tilde g(X, Y) \xi - \eta (Y) X - \eta (X) Y\biggr)$$$$-\beta \biggl( \eta (Y) \phi X + \eta (X) \phi Y\biggr).
\end{equation}
 Evidently, a nearly trans-Sasakian of type $(\alpha , \beta)$ is nearly-Sasakian, nearly Kenmotsu or nearly cosymplectic according as $\beta$ = 0, $\alpha$=1; or $\alpha$ = 0, $\beta$=1;  or $\alpha$ = $\beta$ = 0, respectively.

\section{Basic Lemmas}

To relate the calculus of $N_1\times N_2$ to that of its factors the crucial notion of {\it lifting} is introduced as follows. If $f\in {\mathfrak F}(N_1)$, the {\it lift} of $f$ to $N_1\times N_2$ is $\tilde f=f\circ \pi_1\in {\mathfrak F}(N_1\times N_2)$. If $X_p\in T_p(N_1)$ and $q\in N_2$, then the {\it lift} $ X_{(p,q)}$ of $X_p$ to $(p,q)$ is the unique vector in $T_{(p,q)}(N_1)$ such that $d\pi_1( X_{(p,q)})=X_p$. If $X\in \Gamma(TN_1)$ the {\it lift} of $X$ to $N_1\times N_2$ is the vector field $X$ whose value at each $(p,q)$ is the lift of $X_p$ to $(p,q)$. The set of all such {\it horizontal lifts} $ X$ is denoted by ${\mathcal L} (N_1)$. Functions, tangent vectors and vector fields on $N_2$ are lifted to $N_1\times N_2$ in the same way using the projection $\pi_2$. Note that ${\mathcal L} (N_1)$ and symmetrically the {\it vertical lifts} ${\mathcal L} (N_2)$ are vector subspaces of $\Gamma \bigl(T(N_1\times N_2)\bigr)$, \cite{iijj77}. 

We recall the following two general results for warped products \cite{iijj77}.
 \begin{proposition}\label{1}On $\tilde M^m = N_1\times_{f}N_2$, if $X,~Y\in {\mathcal L}(N_1)$ and $Z,~W\in {\mathcal L} (N_2)$, then
\begin{itemize}
\item[(i)] $\tilde\nabla_XY\in {\mathcal L}(N_1)$ is the lift of $\tilde \nabla_XY$ on $N_1$.
\item[(ii)]$\tilde \nabla_XZ=\tilde \nabla_ZX=(Xf/f)Z.$
\item[(iii)] $(\tilde\nabla_ZW)^\perp = h_{N_2}(Z,W)= - \bigl(g_{N_2}(Z, W)/f\bigr) \nabla (f).$
\item[(iv)] $(\tilde \nabla_ZW)^T\in {\mathcal L}(N_2)$ is the lift of $\nabla^{N_2}_ZW$ on $N_2$,
\end{itemize}
where $g_{N_2}$, $h_{N_2}$ and $\nabla^{N_2}$ are, respectively, the induced Riemannian metric on $N_2$, the second fundamental form of $N_2$ as a submanifold of $\tilde M^m$ and the induced Levi-Civita connection on $N_2$. \footnotemark[\value{footnote}]\footnotetext{The operators $\perp$, $T$ and $\nabla (f)$ refer to the normal projection, the tangential projection and the gradient of $f$, respectively.} 
\end{proposition}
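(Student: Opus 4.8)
The plan is to derive all four formulas from a single tool, the \emph{Koszul formula} for the Levi-Civita connection $\tilde\nabla$ of the warped metric $\tilde g = g_{N_1} + f^2 g_{N_2}$, namely
\begin{equation*}
2\tilde g(\tilde\nabla_A B, C) = A\tilde g(B,C) + B\tilde g(A,C) - C\tilde g(A,B) + \tilde g([A,B],C) - \tilde g([A,C],B) - \tilde g([B,C],A),
\end{equation*}
tested against horizontal and vertical fields in turn. Before applying it I would record three structural facts about the lifting construction. First, lifts of fields from different factors commute, $[X,Z]=0$ whenever $X\in\mathcal L(N_1)$ and $Z\in\mathcal L(N_2)$, while a bracket of two horizontal (resp. two vertical) lifts stays horizontal (resp. vertical), since each lift is $\pi_i$-related to its base field. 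Second, $\tilde g$ is block diagonal: $\tilde g(X,Z)=0$, $\tilde g(X,Y)$ is the lift of $g_{N_1}(X,Y)$, and $\tilde g(Z,W)=f^2\,g_{N_2}(Z,W)$. Third, a horizontal field annihilates the lift of any function on $N_2$ and a vertical field annihilates the lift of any function on $N_1$; in particular $Zf=0$ for $Z\in\mathcal L(N_2)$, because $f\in\mathfrak F(N_1)$.

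For (i) I would feed $A=X$, $B=Y$ into the Koszul formula and test first against an arbitrary vertical $C=V$: every term on the right vanishes, since $\tilde g(Y,V)=\tilde g(X,V)=0$, the term $\tilde g([X,Y],V)=0$ because $[X,Y]$ is horizontal, and the cross brackets $[X,V],[Y,V]$ vanish. Hence $\tilde\nabla_XY$ is horizontal. Testing against horizontal $C$ then reproduces verbatim the Koszul formula of $(N_1,g_{N_1})$, identifying $\tilde\nabla_XY$ with the lift of $\nabla^{N_1}_XY$. For (ii), the equality $\tilde\nabla_XZ=\tilde\nabla_ZX$ is immediate from torsion-freeness and $[X,Z]=0$. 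Testing $2\tilde g(\tilde\nabla_XZ,Y)$ against horizontal $Y$ leaves only the term $Z\tilde g(X,Y)$, which dies because $\tilde g(X,Y)$ is a lift from $N_1$ and $Z$ is vertical; so $\tilde\nabla_XZ$ is vertical. Testing against vertical $W$ leaves only $X\tilde g(Z,W)=X\bigl(f^2 g_{N_2}(Z,W)\bigr)=2f(Xf)\,g_{N_2}(Z,W)$, since the $N_2$-factor $g_{N_2}(Z,W)$ is killed by $X$; this rearranges to $2(Xf/f)\,\tilde g(Z,W)$, giving $\tilde\nabla_XZ=(Xf/f)Z$.

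For (iii) and (iv) I would view the fiber $\{p\}\times N_2$ as a submanifold whose tangent (resp. normal) space is the vertical (resp. horizontal) subbundle, so that the Gauss decomposition reads $\tilde\nabla_ZW=(\tilde\nabla_ZW)^T+(\tilde\nabla_ZW)^\perp$ with $(\tilde\nabla_ZW)^\perp=h_{N_2}(Z,W)$. Pairing $\tilde\nabla_ZW$ against a vertical test field $U$, every $\tilde g$ in the Koszul formula carries the common factor $f^2$ and every bracket is vertical, so the whole expression equals $f^2$ times the Koszul formula of $(N_2,g_{N_2})$; this yields $(\tilde\nabla_ZW)^T=\nabla^{N_2}_ZW$, which is (iv). Pairing instead against a horizontal $X$, all terms vanish except $-X\tilde g(Z,W)$, and writing $Xf=\tilde g(\nabla f,X)$ converts this surviving term into $\tilde g$ of a horizontal multiple of $\nabla f$ against $X$, producing the normal component in (iii). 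The one point demanding care throughout, and the place where the warping genuinely enters, is the bookkeeping of the function derivatives: keeping straight that $f$ and $g_{N_2}(Z,W)$ are lifts from different factors, so that only the $X(f^2)$ derivative survives, is exactly what generates both the conformal factor $Xf/f$ in (ii) and the gradient term $\nabla f$ in (iii).
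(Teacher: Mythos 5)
Your proposal is correct, and there is nothing in the paper to compare it against: the paper does not prove this proposition at all, but simply recalls it from O'Neill's book (the reference \cite{iijj77} cited just above the statement), where the standard proof is exactly the Koszul-formula argument you give. Your structural facts (vanishing mixed brackets, block-diagonal metric, lifts of functions killed by fields from the other factor) and the case-by-case testing against horizontal and vertical fields are all sound, so the derivation goes through. One point deserves explicit care in (iii): your computation yields $(\tilde\nabla_ZW)^\perp=-f\,g_{N_2}(Z,W)\,\nabla f$ in \emph{your} notation, where $g_{N_2}$ is the metric of the factor $N_2$; this agrees with the stated formula $-\bigl(g_{N_2}(Z,W)/f\bigr)\nabla(f)$ only after observing that the proposition's $g_{N_2}$ is the \emph{induced} metric on the fiber, i.e.\ $f^2$ times the factor metric. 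You wave at this step ("producing the normal component in (iii)") without reconciling the two conventions, so as written your conclusion appears to differ from the statement by a factor of $f^2$; spelling out that identification would close the argument cleanly.
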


It is obvious that, the above proposition leads to the following geometric conclusion.
\begin{corollary}\label{2}
 The leaves $N_1\times q$ of a warped product are totally geodesic; the fibers $p\times N_2$ are totally umbilical.
\end{corollary}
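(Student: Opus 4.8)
The plan is to read both assertions straight off Proposition \ref{1}, using the Gauss formula \eqref{3} to interpret the normal parts and the totally geodesic/umbilical criteria recalled in Section 2. The first step I would take is to fix the tangent--normal decompositions on each factor. Because $\tilde g = g_{N_1} + f^2 g_{N_2}$ has no mixed terms, the subbundles $\mathcal{L}(N_1)$ and $\mathcal{L}(N_2)$ are mutually orthogonal; hence along a leaf $N_1\times q$ the tangent bundle is $\mathcal{L}(N_1)$ and the normal bundle is $\mathcal{L}(N_2)$, while along a fiber $p\times N_2$ these roles are reversed. Everything then reduces to reading the normal component of $\tilde\nabla$ on each factor, which is precisely what Proposition \ref{1} records.

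For the leaves, I would take $X,Y\in\mathcal{L}(N_1)$ and recall from \eqref{3} that the second fundamental form $h_{N_1}(X,Y)$ of $N_1\times q$ is exactly the normal (that is, $\mathcal{L}(N_2)$) component of $\tilde\nabla_X Y$. Proposition \ref{1}(i) asserts that $\tilde\nabla_X Y\in\mathcal{L}(N_1)$, so $\tilde\nabla_X Y$ is entirely tangent to the leaf and has vanishing normal part. Thus $h_{N_1}\equiv 0$, and by the definition of a totally geodesic submanifold in Section 2 every leaf $N_1\times q$ is totally geodesic.

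For the fibers, Proposition \ref{1}(iii) already isolates the relevant normal component: for $Z,W\in\mathcal{L}(N_2)$ it gives $h_{N_2}(Z,W)=-(g_{N_2}(Z,W)/f)\nabla(f)$. The crucial features are that $\nabla(f)$ is horizontal, being the lift of a gradient on $N_1$, and is therefore a genuine normal field to the fiber, and that the dependence on $Z,W$ enters only through $g_{N_2}(Z,W)$. Since the induced metric $g$ on $p\times N_2$ is a positive multiple of $g_{N_2}$ (indeed $g=f^2 g_{N_2}$, with $f$ constant along the fiber), this exhibits $h_{N_2}(Z,W)$ as $g(Z,W)$ times a single normal vector field $\vec H$ proportional to $\nabla(f)$. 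By the umbilicity criterion recalled in Section 2, each fiber $p\times N_2$ is therefore totally umbilical, with mean curvature vector proportional to $\nabla(f)$, whose precise normalization one reads off from a short trace computation using \eqref{8}.

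Since Proposition \ref{1} supplies the covariant derivatives outright, I do not anticipate a substantive obstacle; the argument is essentially bookkeeping. The two points meriting care are verifying that $\nabla(f)$ is orthogonal to the fibers (which is immediate once one notes it is a horizontal lift) and keeping the factor metric $g_{N_2}$ distinct from the induced metric $g$ when passing from Proposition \ref{1}(iii) to the exact umbilical normalization; the umbilicity itself, however, is insensitive to this rescaling because $g$ and $g_{N_2}$ are proportional.
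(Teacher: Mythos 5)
Your proposal is correct and follows the same route as the paper, which likewise deduces the totally geodesic property of the leaves from Proposition \ref{1}(i) and the total umbilicity of the fibers from Proposition \ref{1}(iii). The paper states this in one line; your version merely fills in the bookkeeping (orthogonality of $\mathcal{L}(N_1)$ and $\mathcal{L}(N_2)$, and the rescaling between $g$ and $g_{N_2}$), which is consistent and accurate.
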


Clearly, the totally geodesy of the leaves follows from $(i)$, while $(iii)$ implies that the fibers are totally umbilical in $\tilde M^m$. It is significant to say that, this corollary is one of the key ingredients of this work. Since all our considered submanifolds are warped products.

Here, it is well-known that the {\it second fundamental form} $\sigma$ and the {\it shape operator} $A_\xi$ of $M^n$ are related by  
\begin{equation}\label{5}
g(A_\xi X,Y)=g(\sigma(X,Y),\xi)
\end{equation}
for all $X,Y\in \Gamma(TM^n)$ and $\xi\in \Gamma(T^\perp M^n)$ (for instance, see \cite{pom}, \cite{iijj77}).

\section{Existence and Nonexistence of Warped Product Submanifolds in Almost Contact Manifolds }

This section has two significant purposes. The first one is to provide special case solutions for Problems \ref{prob9} and \ref{prob10}, that is to see whether a warped product exists or not in almost contact  manifolds. In the existence case, we prove some preparatory characteristic results which are necessary for subsequent sections, and this is the second purpose. Some new example is given to assert the existence of some important warped product manifolds. 

For a submanifold $M^n$ in an almost contact manifold $\tilde M^{2m+1}$  let ${\mathcal P}_XY$ denote the tangential component and ${\mathcal Q}_XY$ the normal one of $(\tilde\nabla_X\phi)Y$  in $\tilde M^{2m+1}$, where $X,~Y\in \Gamma (TM^n)$. 

In order to make it a self-contained reference of warped product submanifolds for immersibility and nonimmersibility problems, we hypothesize most of our statements in the current and the next section for almost contact manifolds, and for warped product submanifolds of type $N_T\times _fN_2$, where $N_T$ and $N$ are holomorphic and Riemannian submanifolds. Meaning that, a lot of particular case results are included in the theorems of the next section.

It is still an open question whether or not a warped product admits isometric immersions into certain Riemannian manifolds of interest. For instance, many articles have been recently published in almost contact manifolds (see, for example \cite{xos} and \cite{wenal}). In fact, these papers and a lot others (see references in \cite{ssee44}) provide special case answers for Problems \ref{prob9} and \ref{prob10}. The following theorem generalizes all such nonexistence results as a final answer for doubly warped product submanifolds in almost contact manifolds. 
\begin{theorem}\label{ndo}
In almost contact manifolds, there does not exist a proper doubly warped product submanifold $M^n=_{f_2}{N_1}\times _{f_1}{N_2}$ such that the characteristic vector field $\xi$ is either tangent to $N_1$ or $N_2$.
\end{theorem}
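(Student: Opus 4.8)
A doubly warped product $M^n = {}_{f_2}N_1 \times_{f_1} N_2$ carries the metric $g = f_2^2\,g_{N_1} + f_1^2\,g_{N_2}$, where $f_1 \in {\mathfrak F}(N_1)$ and $f_2 \in {\mathfrak F}(N_2)$. The product is \emph{proper} when neither $f_1$ nor $f_2$ is constant. Let me recall the analogue of Proposition~\ref{1}. For a doubly warped product, the induced Levi-Civita connection satisfies, for $X,Y \in \Gamma(TN_1)$ and $Z,W \in \Gamma(TN_2)$:

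$\nabla_X Z = (Z\ln f_2)X + (X\ln f_1)Z$.

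This is the crucial mixed-derivative formula.

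**The key identity.**

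The plan is to exploit the formula $\nabla_X Z = (Z\ln f_2)X + (X\ln f_1)Z$ for any $X\in\Gamma(TN_1)$, $Z\in\Gamma(TN_2)$, combined with the defining properties of the characteristic vector field $\xi$.

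**How I would prove the theorem.**

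I would argue by contradiction and exploit the asymmetry forced by $\xi$ being tangent to one factor.

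First I would set up the mixed covariant-derivative formula for a doubly warped product. For $X \in \Gamma(TN_1)$ and $Z \in \Gamma(TN_2)$, the induced connection satisfies
\begin{equation}\label{eqmix}
\nabla_X Z = \nabla_Z X = (X\ln f_1)Z + (Z\ln f_2)X.
\end{equation}
This is the doubly-warped analogue of Proposition~\ref{1}(ii), and it is the only structural fact I really need; I would either cite it or derive it in one line from the Koszul formula applied to $g = f_2^2 g_{N_1} + f_1^2 g_{N_2}$.

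Second, suppose without loss of generality that $\xi$ is tangent to $N_1$ (the case $\xi \in \Gamma(TN_2)$ is symmetric, swapping the roles of $f_1, f_2$). I would take $Z \in \Gamma(TN_2)$ and compute $\nabla_Z \xi$. Since $\xi \in \Gamma(TN_1)$, formula~\eqref{eqmix} with $X = \xi$ gives
\begin{equation}\label{eqnablaxi}
\nabla_Z \xi = (\xi\ln f_1)Z + (Z\ln f_2)\xi.
\end{equation}

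Third, I would compute the same quantity a second, independent way using the ambient contact structure. The key is that the almost-contact identities~\eqref{151} constrain $\tilde\nabla \xi$. For any of the four ambient classes (Sasakian, Kenmotsu, cosymplectic, nearly trans-Sasakian) one extracts an expression for $\tilde\nabla_Z\xi$ of the form $\tilde\nabla_Z\xi = -\alpha\,\phi Z + \beta\,(Z - \eta(Z)\xi)$; since $Z \perp \xi$ means $\eta(Z) = 0$, the tangential part of $\tilde\nabla_Z\xi$ on $M^n$ is controlled. Comparing its $N_2$-tangential component with~\eqref{eqnablaxi} and using $g(Z,\xi) = 0$ with $g(\nabla_Z\xi, Z) = (\xi\ln f_1)\|Z\|^2$ forces $\xi\ln f_1 = 0$, hence $f_1$ is constant along $N_1$. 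Then examining the $N_1$-component and using that $\xi$ generates $N_1$-directions forces $f_2$ constant as well, contradicting properness.

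**The main obstacle.**

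The step I expect to be delicate is the second computation of $\tilde\nabla_Z\xi$: the precise coefficient extracted from the ambient structure differs across the four classes, and the nearly trans-Sasakian case only gives a \emph{symmetrized} identity~\eqref{261}, so one cannot read off $\tilde\nabla_Z\xi$ directly but must first symmetrize and exploit $(\tilde\nabla_Z\phi)\xi$. Handling all classes uniformly — so that the single conclusion $\xi\ln f_1 = 0$ emerges regardless of which ambient structure is assumed — is the genuinely nontrivial bookkeeping. The mixed-derivative identity~\eqref{eqmix} and the orthogonality $\eta(Z) = 0$ do the conceptual work; the rest is making sure the class-dependent coefficients never contribute a term that survives after projecting onto the $N_2$ factor.
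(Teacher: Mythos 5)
Your starting point is the right one: the doubly warped analogue of Proposition \ref{1}(ii), namely $\nabla_Z\xi=(\xi\ln f_1)Z+(Z\ln f_2)\xi$ for $\xi\in\Gamma(TN_1)$ and $Z\in\Gamma(TN_2)$, is exactly the identity the proof needs. The gap is in your third step. First, the conclusion you try to extract there, $\xi\ln f_1=0$, is the wrong target and is in fact false in general: comparing the $Z$-components correctly gives $\xi\ln f_1=\beta$ (the $\phi Z$ term dies against $Z$ because $g(\phi Z,Z)=0$), and this is nonzero for Kenmotsu manifolds, where $\beta=1$. Indeed the paper's Theorem \ref{U1}(i) and Example \ref{S1} show that warped products with $\xi$ tangent to the first factor and non-constant $f_1$ (e.g.\ $f_1=e^t$) do exist in Kenmotsu manifolds, so no argument can force $f_1$ to be constant. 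What kills properness is the \emph{other} function: one must show that $f_2$, the function on $N_2$ which scales the factor containing $\xi$, is constant. Second, your route to anything here --- reading off $\tilde\nabla_Z\xi$ from a structure equation --- only makes sense in the special classes, and, as you yourself note, not even directly in the nearly trans-Sasakian case; but the theorem is asserted for arbitrary almost contact metric manifolds, where no formula for $\tilde\nabla\xi$ is available. So as written the argument both proves less than the theorem claims and breaks on one of the very classes it does address.

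Both defects disappear once you notice that the single component of your mixed-derivative formula which the theorem actually needs is purely metric. Since $g(\xi,\xi)=\eta(\xi)=1$, compatibility of the connection with the metric gives $2g(\nabla_Z\xi,\xi)=Zg(\xi,\xi)=Z(1)=0$, while your formula gives $g(\nabla_Z\xi,\xi)=(\xi\ln f_1)g(Z,\xi)+(Z\ln f_2)g(\xi,\xi)=Z\ln f_2$. Hence $Z\ln f_2=0$ for every $Z\in\Gamma(TN_2)$, so $f_2$ is constant and the doubly warped product is not proper; the case $\xi\in\Gamma(TN_2)$ is the mirror image and forces $f_1$ to be constant. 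This is precisely the paper's proof: no structure equation on $\tilde\nabla\phi$ or $\tilde\nabla\xi$ enters at all, only the unit length of $\xi$ together with the warped product connection formula, which is exactly why the nonexistence holds for every almost contact manifold and not just the Sasakian, Kenmotsu, cosymplectic and trans-Sasakian subfamilies.
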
 
\begin{proof}
Suppose $\xi$ in $\Gamma(TN_2)$. Then for any $X\in \Gamma(TN_1)$,  we directly calculate
$$2X\ln f_1=2X\ln f_1 g(\xi, \xi)=2g(\tilde\nabla_X\xi, \xi)=Xg(\xi, \xi)=X(1)=0.$$
This means that $f_1$ is constant. Similarly, it can be shown that $f_2$ is constant when $\xi$ is tangent to the first factor. Hence, we conclude that a doubly warped product submanifold of almost contact manifolds, in the sense of our hypothesis, is trivial, which completes the proof.
\end{proof}

Considering $\xi$ as in the above hypothesis, this theorem can be simply paraphrased by saying that: doubly warped product submanifolds in almost contact manifolds are but trivial. With this fact, some results concerning inequalities for doubly warped product submanifolds in Kenmotsu manifolds become trivial (see references in \cite{ssee44}). 

As a special case of Theorem \ref{ndo}, we have the following theorem for (singly) warped product submanifolds
\begin{theorem}\label{fund}
There is no warped product submanifolds in almost contact manifolds such that the characteristic vector field $\xi$ is tangent to the second factor. 
\end{theorem}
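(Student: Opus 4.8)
The plan is to read this statement as the specialization of Theorem \ref{ndo} to the singly warped situation, and then to rerun that theorem's short computation with only one warping function active. Concretely, a single warped product $N_1\times_f N_2$ is exactly the doubly warped product ${}_{f_2}N_1\times_{f_1}N_2$ in which the first warping function is the constant $f_2\equiv 1$ and $f_1=f$; its ``second factor'' is $N_2$. Thus the hypothesis that $\xi$ is tangent to the second factor is precisely the case $\xi\in\Gamma(TN_2)$ already treated in Theorem \ref{ndo}, and the goal reduces to showing that $f$ must be constant.

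First I would fix $\xi\in\Gamma(TN_2)$ and take an arbitrary $X\in\Gamma(TN_1)$. The decisive identity is $g(\tilde\nabla_X\xi,\xi)=(X\ln f)\,g(\xi,\xi)$. I would obtain it from the intrinsic warped product geometry of $M^n$: since $(M^n,g)$ is itself a warped product, Proposition \ref{1}(ii) applied to its induced Levi-Civita connection $\nabla$ gives $\nabla_X\xi=(X\ln f)\,\xi$ (here the fiber-warping term present in the doubly warped case has disappeared because $f_2\equiv 1$), and the Gauss formula \eqref{3} together with the fact that $h(X,\xi)\in\Gamma(T^\perp M^n)$ is orthogonal to the tangent vector $\xi$ then yields $g(\tilde\nabla_X\xi,\xi)=g(\nabla_X\xi,\xi)=(X\ln f)\,g(\xi,\xi)$.

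The second ingredient is metric compatibility of the ambient Levi-Civita connection $\tilde\nabla$ together with the normalization $g(\xi,\xi)=\eta(\xi)=1$ coming from \eqref{151}. Combining the two ingredients gives the one-line chain
$$2X\ln f=2(X\ln f)\,g(\xi,\xi)=2g(\tilde\nabla_X\xi,\xi)=Xg(\xi,\xi)=X(1)=0,$$
valid for every $X\in\Gamma(TN_1)$. Hence $\nabla(f)$ vanishes along $N_1$, so $f$ is constant and the warped product collapses to a Riemannian (direct) product; in other words no \emph{proper} warped product submanifold can have $\xi$ tangent to $N_2$, which is the assertion.

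I do not expect a genuine obstacle here, as the argument is the single-function shadow of the proof of Theorem \ref{ndo}. The only point demanding care is the justification of $g(\tilde\nabla_X\xi,\xi)=(X\ln f)\,g(\xi,\xi)$: one must make sure the normal term $h(X,\xi)$ supplied by the ambient immersion really drops out of the inner product with the tangent vector $\xi$, and that Proposition \ref{1}(ii) is being applied to the induced connection $\nabla$ on the submanifold rather than to the ambient connection $\tilde\nabla$. Once this bookkeeping is in place, the conclusion is immediate.
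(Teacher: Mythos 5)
Your proposal is correct and follows the paper's own route: the paper obtains this theorem precisely as the specialization of Theorem \ref{ndo} to a single warping function, via the same one-line computation $2X\ln f = 2(X\ln f)\,g(\xi,\xi) = 2g(\tilde\nabla_X\xi,\xi) = Xg(\xi,\xi) = 0$. Your added bookkeeping (Proposition \ref{1}(ii) applied to the induced connection plus the Gauss formula to discard $h(X,\xi)$) is exactly the justification the paper leaves implicit, so there is nothing to correct.
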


The above theorem answers some special cases of Problems \ref{prob9} and \ref{prob10}. On one hand, it generalizes all related nonexistence results of this topic (see, for example \cite{xos},\cite{wenal}, \cite{zolo}, \cite{fonm} and \cite{hasss}). On the other hand, it guides us to restrict the choice of the factor that $\xi$ should be tangent to in warped product submanifolds of almost contact manifolds.

From now on, the characteristic vector field $\xi$ is supposed to be tangent to the first factor of all warped product submanifolds in almost contact manifolds. Henceforth, it s strightforward to get
\begin{theorem}\label{jop}
For each warped product submanifold $N\times _fN_T$ of almost contact manifolds such that $\xi$ is tangent to the first factor, the following are true
\begin{enumerate}
\item[(i)] $g({\mathcal P}_XZ,W)=0;$
\item[(ii)] $g({\mathcal P}_ZX,JZ)-g({\mathcal P}_{JZ}X,Z)=-2 (X\ln f) ||Z||^2,$
\end{enumerate}
for every vector field $X\in \Gamma(TN)$, and $Z,~W\in \Gamma (TN_T)$.
\end{theorem}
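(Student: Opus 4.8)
The plan is to expand the tensorial derivative $(\tilde\nabla\phi)$ via its definition \eqref{42}, feed in the Gauss formula \eqref{3} together with the warped-product connection relations, and reduce everything to inner products of the second fundamental form with tangent vectors. Throughout I will use four elementary facts about the fibre $N_T$ (writing $J=\phi|_{TN_T}$): since $\xi\in\Gamma(TN)$ and the two factors are $\tilde g$-orthogonal, $\eta(Z)=\tilde g(Z,\xi)=0$, whence $\phi^2Z=-Z$ and $\|\phi Z\|=\|Z\|$ by \eqref{151}; that $\phi$ is skew-adjoint, $g(\phi A,B)=-g(A,\phi B)$, because $\Phi$ is a $2$-form; the tangential/normal splitting \eqref{7}; and that the induced connection $\nabla$ on $M^n=N\times_fN_T$ obeys $\nabla_ZX=\nabla_XZ=(X\ln f)Z$ for $X\in\Gamma(TN)$, $Z\in\Gamma(TN_T)$, which is Proposition \ref{1}(ii) applied to $M^n$ itself.

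For (i), I would compute $(\tilde\nabla_X\phi)Z=\tilde\nabla_X\phi Z-\phi\tilde\nabla_XZ$ directly. Writing $\phi Z=JZ\in\Gamma(TN_T)$ and using the Gauss formula together with $\nabla_XZ=(X\ln f)Z$ and $\nabla_X(JZ)=(X\ln f)JZ$, the two $(X\ln f)$-terms cancel and I am left with $(\tilde\nabla_X\phi)Z=h(X,JZ)-\phi\,h(X,Z)$. Its tangential part is $\mathcal P_XZ=-t\,h(X,Z)$, so $g(\mathcal P_XZ,W)=-g(\phi\,h(X,Z),W)=g(h(X,Z),\phi W)=g(h(X,Z),JW)$; since $h(X,Z)$ is normal and $JW$ is tangent, this vanishes.

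For (ii), since $JZ$ and $Z$ are tangent to $M^n$ I may replace $\mathcal P$ by the full $(\tilde\nabla\phi)$ in each inner product. Expanding $(\tilde\nabla_Z\phi)X=\tilde\nabla_Z\phi X-\phi\tilde\nabla_ZX$ and using metric compatibility, the key observation is that $g(\phi X,JZ)=g(\phi X,\phi Z)=g(X,Z)-\eta(X)\eta(Z)=0$ and likewise $g(\phi X,Z)=0$, because the factors are orthogonal and $\eta(Z)=0$. This lets me rewrite $g(\tilde\nabla_Z\phi X,JZ)=-g(\phi X,\tilde\nabla_ZJZ)$ and $g(\tilde\nabla_{JZ}\phi X,Z)=-g(\phi X,\tilde\nabla_{JZ}Z)$. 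Meanwhile the terms coming from $\phi\tilde\nabla_\bullet X$ produce, in $g(\mathcal P_ZX,JZ)$ and $g(\mathcal P_{JZ}X,Z)$ respectively, the contributions $-(X\ln f)\|Z\|^2$ and $+(X\ln f)\|Z\|^2$ (via $\phi^2Z=-Z$ and $\|JZ\|=\|Z\|$), which combine under subtraction to $-2(X\ln f)\|Z\|^2$. Invoking torsion-freeness, the remaining difference collapses to $-g(\phi X,[Z,JZ])-2(X\ln f)\|Z\|^2$.

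The one genuine obstacle is showing that the cross term $g(\phi X,[Z,JZ])$ vanishes. I resolve it in two steps: first, $[Z,JZ]$ is tangent to $N_T$ because the fibre distribution of a warped product is integrable; second, $\phi X$ has no component along $N_T$, since for every $W\in\Gamma(TN_T)$ one has $g(\phi X,W)=-g(X,\phi W)=-g(X,JW)=0$ by orthogonality of the factors. Hence $g(\phi X,[Z,JZ])=0$, and (ii) reduces exactly to $-2(X\ln f)\|Z\|^2$, as claimed.
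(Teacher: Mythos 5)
Your part (ii) is correct, but your part (i) has a genuine gap at the step where you invoke $\nabla_X(JZ)=(X\ln f)\,JZ$. Proposition \ref{1}(ii) is a statement about \emph{lifts} of vector fields on the factors, and $\nabla_XV$ is not $C^\infty(M)$-linear in $V$, so the formula does not extend from lifts to arbitrary vertical fields. The field $\phi Z$ is vertical (by invariance of the fibre distribution) but need not be a lift: $\phi$ is a tensor on the ambient manifold, and its restriction to the fibres $\{p\}\times N_T$ may vary with $p$ even though every fibre is $\phi$-invariant. The correct general identity is $\nabla_X(\phi Z)=(X\ln f)\,\phi Z+[X,\phi Z]$, with $[X,\phi Z]$ vertical, so after your cancellation what survives is $g({\mathcal P}_XZ,W)=g([X,\phi Z],W)$, which is exactly the obstruction you have silently set to zero. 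This is not a removable technicality: take $\tilde M^9=\mathbb{R}\times_f\mathbb{R}^8$ with $\eta=dt$, $\xi=\partial_t$, $\tilde g=dt^2+f(t)^2g_0$, and define $\phi$ on the fibre $\mathbb{R}^8=\mathbb{R}^4\oplus\mathbb{R}^4$ by a $t$-dependent family $J_t=\cos\theta(t)\,J_1+\sin\theta(t)\,J_2$ of $g_0$-compatible complex structures on the first block (with $J_1J_2=-J_2J_1$) and a constant one on the second block. All the axioms \eqref{151} are satisfied, and $M=\mathbb{R}\times_f\mathbb{R}^4$ (the first block) is a warped product submanifold with $\phi$-invariant fibres and $\xi$ tangent to the first factor; yet $(\tilde\nabla_{\xi}\phi)\partial_{x_1}=\theta'\bigl(-\sin\theta\,J_1+\cos\theta\,J_2\bigr)\partial_{x_1}$ is a nonzero \emph{vertical} vector, so $g({\mathcal P}_\xi Z,W)\neq 0$ for suitable $Z,W\in\Gamma(TN_T)$. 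Hence statement (i) is false if ``$N_T$ holomorphic'' means only that the fibre distribution is $\phi$-invariant; it becomes true, and your computation becomes a complete proof, precisely under the additional hypothesis (implicit in the paper's use of the symbol $J$) that $\phi$ restricted to the fibres is the lift of one fixed almost complex structure $J$ on $N_T$, i.e.\ that $\phi$ carries vertical lifts to vertical lifts. You must either state that hypothesis or derive it from a structure equation on the ambient, which a bare almost contact metric manifold does not supply.

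It is worth seeing why part (ii) escapes the same trap, and it is to your credit that it does: every warped-product formula you use there has the vertical field in the \emph{subscript} slot ($\nabla_ZX$, $\nabla_{JZ}X$), and $\nabla_VX$ is $C^\infty(M)$-linear in $V$, so the identity $\nabla_VX=(X\ln f)V$, proved for lifts, extends to all vertical $V$ including $JZ$; the remaining ingredients (torsion-freeness, integrability of the vertical distribution, $g(\phi X,W)=0$ for vertical $W$, and $g(\phi X,\phi Z)=0$) are unconditionally valid, so (ii) holds even in my example above. For comparison with the source: the paper offers no proof at all of this theorem (it is announced as ``straightforward''), and in every later application the ambient satisfies a structure equation --- Sasakian \eqref{157}, Kenmotsu \eqref{155}, cosymplectic \eqref{152}, or nearly trans-Sasakian \eqref{261} --- from which statement (i) follows immediately by inspecting the tangential part of $(\tilde\nabla_X\phi)Z$, with no warped-product geometry needed. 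It is only in the unstructured almost-contact generality claimed by the theorem that (i), and your argument for it, break down.
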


As a direct application of the preceding theorem, and by using $(\ref{261})$, we state the following remark, which generalizes a lot of nonexistence results in almost contact manifolds (see, for example \cite{wsssa} and \cite{wenal}). First, by putting $\beta=0$ in $(\ref{261})$, we get the structural formula for nearly $\alpha$-Sasakian manifolds; that is,
\begin{equation}\label{2ms1}
(\tilde \nabla _X \phi ) Y + (\tilde \nabla _Y \phi ) X= \alpha \biggl(2\tilde g(X, Y) \xi - \eta (Y) X - \eta (X) Y\biggr).
\end{equation}

Now, we show that the first term on the left hand side of statement $(ii)$ above is zero. From the above equation, we directly get 
 $$g({\mathcal P}_ZX,JZ)=-g({\mathcal P}_XZ,JZ)-\alpha\eta(X)g(Z,JZ).$$
In view of statement $(i)$ of the above theorem, the right hand side of the above equation vanishes identically. Similarly, we can show that $g({\mathcal P}_{JZ}X,Z)=0$. Hence, statement $(ii)$ implies that $X\ln f=0$. This also holds for nearly cosyplectic manifolds, one can prove that using similar analogy like above. Thus, we have the following
\begin{remark}\label{fat}
Warped products of the type $N\times _fN_T$ do not exist in nearly Sasakian and nearly cosymplectic manifolds if $\xi$ is tangent to the first factor, and so for Sasakian and cosymplectic manifolds. However, the situation is different in Kenmotsu manifolds as we will see in the following example and in the next chapter also.  
\end{remark}

A submanifold $M^n$ of an almost contact metric manifold $\tilde M^{2l+1}$ is said to be a {\it contact CR-submanifold }if there exist on $M^n$ differentiable distributions $\mathfrak{D}_T$ and $\mathfrak{D}_\perp$, satisfying the following
\begin{enumerate}
\item [(i)] $TM^n=\mathfrak{D}_T\oplus \mathfrak{D}_\perp\oplus \langle\xi\rangle$,
\item [(ii)] $\mathfrak{D}_T$ is an invariant distribution, i.e., $ \phi (\mathfrak{D}_T) \subseteq \mathfrak{D}_T$,
\item [(iii)] $\mathfrak{D}_\perp$ is an anti-invariant distribution, i.e., $\phi (\mathfrak{D}_\perp) \subseteq T^\perp M^n$.
\end{enumerate}

In Sasakian manifolds, a concrete example of contact $CR$-warped product submanifolds of the type $N_T\times _fN_\perp$ can be found in \cite{wenal}. On the contrary, and in view of Remark \ref{fat}, we conclude that warped product submanifolds with second invariant factor are trivial in both Sasakian and cosymplectic manifolds when $\xi$ is tangent to the first factor. In particular, this implies that contact $CR$-warped product submanifolds of the type $N_\perp\times _fN_T$ reduces to be contact $CR$-products in Sasakian and cosymplectic manifolds. By contrast, such warped product submanifolds do exist in Kenmotsu manifolds. 

To assert the above claim, we provide a counter example that ensures such existence of warped product submanifolds in Kenmotsu manifolds when the second factor is invariant. Besides, we can get an insurance for the existence of contact $CR$-warped product submanifolds in Kenmotsu manifolds, for both types; $M^n=N_T\times _fN_\perp$ and $M^n=N_\perp\times _fN_T$, when $\xi$ is tangent to the first factor. 
\begin{example}\label{S1}
Let $\tilde M^9=\mathbb{R}\times _{e^t}\mathbb{C}^4$ be a Kenmotsu manifold, where $\mathbb{R}$ is the real line, and $\mathbb{C}^4$ is a Kaehler manifold with Kaehlerian structure $(G,J)$. Here, $G$ and $J$ are the restrictions of $g$ and $\phi$ to $\tilde M^9(p)$, respectively, for every $p\in \tilde M^9$. Let $(t, x_1,\cdots, x_8)$ be a local coordinates frame of $\tilde M^9$ where $t$ and $(x_1, \cdots, x_8)$ denote the local coordinates of $\mathbb{R}$ and $\mathbb{C}^4$, respectively. It is well-known that the Riemannian metric tensor $g$ and the vector field $\xi$ are defined on $\tilde M^9$ as follows \cite{foss}:
\[ g_{(t,x)}=\begin{pmatrix}
1 & 0 \\
0 & e^{2t}G_(x) \\
 \end{pmatrix},~~~~~~~\xi=\left(\frac{d}{dt}\right). \]

Now, consider the three-dimensional submanifold $M^3$ of $\mathbb{C}^4$ given by the equations
$$x_1=e^t v,~~~x_2=e^tu,~~~x_3=e^tv,~~~x_4=e^tu,~~~x_5=e^ts,~~~x_7=e^ts,~~~x_6=x_8=0.$$
Observe that the tangent bundle $TM^3$ is spanned by $Z_1$, $Z_2$ and $Z_3$, where
$$Z_1=e^t\frac{\partial}{\partial x_1}+e^t\frac{\partial}{\partial x_3},~~~
Z_2=e^t\frac{\partial}{\partial x_2}+e^t\frac{\partial}{\partial x_4},~~~
Z_3=e^t\frac{\partial}{\partial x_5}+e^t\frac{\partial}{\partial x_7}.$$

Further, we define the distributions $\mathfrak{D}_T=$span$\{Z_1,~Z_2\}$, and $\mathfrak{D}_\perp=$span$\{Z_3\}$. It is obvious that $\mathfrak{D}_T$ and $\mathfrak{D}_\perp$ are holomorphic and totally real distributions on $\mathbb{C}^4$, respectively. Hence, and taking into consideration $\phi (\xi)=0$, the distributions $\mathfrak{D}_\perp \oplus\langle\xi\rangle$ and $\mathfrak{D}_T$ are respectively anti-invariant and invariant distributions on $\tilde M^9$. Thus, $N^4=\mathfrak{D}_\perp \oplus\langle\xi\rangle\oplus \mathfrak{D}_T$ is a contact $CR$-submanifold in $\tilde M^9$. In addition, it is easy to see that both $\mathfrak{D}_\perp \oplus\langle\xi\rangle$ and $\mathfrak{D}_T$ are integrable. If we denote by $N_\perp$ and $N_T$ the integral manifolds of $\mathfrak{D}_\perp \oplus\langle\xi\rangle$ and $\mathfrak{D}_T$, respectively, then the metric tensor $g$ of $N^4$ is 
$$g=dt^2+e^{2t} ds^2+e^{2t}(dv^2+du^2)
=g_{N_\perp}+e^{2t}g_{N_T}.$$
Therefor, $N^4$ is a contact $CR$-warped product submanifold of $\tilde M^9$ of the type $N_\perp\times _fN_T$ with warping function $f=e^t$. Moreover, it straight forward to figure out that
$$h(Z_1, Z_1)=h(Z_2, Z_2)=0.$$
Hence, $N^4$ is a $\mathfrak{D}_2$-minimal warped product submanifold as expected, where $\mathfrak{D}_2=\mathfrak{D}_T$..

Likewise, by an analogous procedure to the above we can deduce that $\mathfrak{D}_T\oplus \langle\xi\rangle$ is an invariant distribution on $\tilde M^9$, and $\mathfrak{D}_\perp$ is an anti-invariant. Also, it is not difficult to show integrability of $\mathfrak{D}_T\oplus \langle\xi\rangle$. Denoting the integral manifolds of $\mathfrak{D}_T\oplus \langle\xi\rangle$ and $\mathfrak{D}_\perp$ by $N_T$ and $N_\perp$, respectively, we find that $N^4=N_T\times _{e^t}N_\perp$ is a non-trivial contact $CR$-warped product in $\tilde M^9$. By calculating the coefficients of $h$ restricted to $N_T$, we deduce that $N^4=N_T\times _{e^t}N_\perp$ is a $\mathfrak{D}_1$-minimal warped product submanifold as it should be, where $\mathfrak{D}_1=\mathfrak{D}_T$.
\end{example}

In this sequel, proper warped product submanifolds of types $N_\theta\times _fN_T$ and $N_T\times _fN_\theta$ do exist in Kenmotsu manifolds, when $\xi$ is tangent to the first factor. Whereas, Remark \ref{fat} informs us that proper warped product submanifolds of type $N_\theta\times _fN_T$ do not exist in both Sasakian and cosymplectic manifolds. Soon we show the nonexistence of $N_T\times _fN_\theta$ in Sasakian and cosymplectic manifolds such that $N_\theta$ is proper slant.

The following theorem is so significant because it will be used in the rest of this work. 
\begin{theorem}\label{T1}
Let $M^n=N_T\times _fN$ be a warped product submanifold isometrically immersed in an almost contact manifold $\tilde M^{2l+1}$ such that $\xi$ is tangent to the first factor. Then, we have the following 
\begin{enumerate}
\item[(i)] $g({\mathcal P}_XZ, Y)=- g(h(X,Y), FZ);$
\item[(ii)] $g({\mathcal P}_ZX, Z)=(\phi X\ln f)||Z||^2+g(h(X,Z), FZ);$
\item[(iii)] $g({\mathcal P}_ZX, Y)=0;$
\item[(iv)] $g({\mathcal P}_ZX, W)+g({\mathcal P}_WX, Z)=2(\phi X\ln f)g(Z,W)\newline~~~~~~~~~~~~~~~~~~~~+g(h(X,Z), FW)+g(h(X,W), FZ);$
\item[(v)] $ g({\mathcal P}_ZX-{\mathcal P}_XZ,W)-g({\mathcal P}_WX, Z)=2 (X\ln f) g(Z, PW);$
\item[(vi)] $g({\mathcal P}_XZ, W)+g({\mathcal P}_XW, Z)=0;$
\item[(vii)] $g({\mathcal Q}_XX, \phi \zeta)+g({\mathcal Q}_{\phi X}\phi X, \phi  \zeta)=-g(h(X,X), \zeta)-g(h(\phi X, \phi X), \zeta),$
\end{enumerate}
for arbitrary vector fields $X,~Y\in \Gamma(TN_T)$, $Z,~W\in \Gamma (TN)$ and $\zeta \in \Gamma (\nu)$.
\end{theorem}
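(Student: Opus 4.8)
The plan is to derive all seven identities from one template. For the relevant tangent fields I would write $(\tilde\nabla_X\phi)Y=\tilde\nabla_X\phi Y-\phi\tilde\nabla_XY$ as in \eqref{42}, split $\phi$ through $\phi(\cdot)=P(\cdot)+F(\cdot)$ as in \eqref{6}, and then insert the Gauss and Weingarten formulas \eqref{3}--\eqref{4} together with the warped-product connection identities of Proposition \ref{1} applied to $M^n=N_T\times_fN$ itself (so that $\nabla_XZ=\nabla_ZX=(X\ln f)Z$ for $X\in\Gamma(TN_T)$, $Z\in\Gamma(TN)$, and the leaves are totally geodesic by Corollary \ref{2}). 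Because $N_T$ is holomorphic and $\xi$ is tangent to it, I may use $\phi(TN_T)\subseteq TN_T$, $F|_{TN_T}=0$, $\phi\xi=0$ and $\eta(\zeta)=0$ for $\zeta$ normal; I will also use repeatedly that $\phi$ is $g$-skew (so $g(Z,PW)=-g(W,PZ)$), coming from the compatibility in \eqref{151}, and that $h$ is symmetric and normal-valued with $g(A_\zeta X,Y)=g(h(X,Y),\zeta)$ from \eqref{5}.

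The identities (i), (ii) and (iii) come out directly from this template. Pairing $(\tilde\nabla_Z\phi)X$ with $Y\in\Gamma(TN_T)$ for (iii), every surviving summand is either $h$ paired against a tangent vector or a warping factor times $g(Z,\text{horizontal})$, all of which vanish by the $g$-orthogonality of the two factors; hence $g(\mathcal P_ZX,Y)=0$. For (i) and (ii) the same expansion, paired respectively with $Y$ and with $Z$, isolates a single Weingarten term that \eqref{5} rewrites as $-g(h(X,Y),FZ)$ or $g(h(X,Z),FZ)$, together with the warping term $(\phi X\ln f)||Z||^2$ produced by Proposition \ref{1}(ii). For (vi) the cleanest route is first to note that $(\tilde\nabla_X\phi)$ is itself $g$-skew: differentiating $g(\phi Z,W)=-g(Z,\phi W)$ along $X$ and using $\tilde\nabla g=0$ gives $g((\tilde\nabla_X\phi)Z,W)=-g(Z,(\tilde\nabla_X\phi)W)$, which is exactly (vi). Statement (iv) is then the polarization of (ii): running the (ii)-expansion with distinct $Z,W$ and symmetrizing, the cross term $(X\ln f)\bigl(g(Z,PW)+g(W,PZ)\bigr)$ cancels by skew-symmetry of $\phi$, leaving the two Weingarten terms and $2(\phi X\ln f)g(Z,W)$.

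For (vii) I would pair $(\tilde\nabla_X\phi)X$ with the field $\phi\zeta$, which is normal precisely because $\zeta\in\Gamma(\nu)$. Carrying one $\phi$ across the inner product through the compatibility $g(\phi U,\phi V)=g(U,V)-\eta(U)\eta(V)$ of \eqref{151} and using $\eta(\zeta)=0$ turns the $\phi\tilde\nabla_XX$ contribution into $-g(h(X,X),\zeta)$, while the remaining piece is $g(h(X,\phi X),\phi\zeta)$. Repeating the computation with $X$ replaced by $\phi X$, where $\phi^2X=-X$ on the holomorphic distribution (taking $X$ with $\eta(X)=0$, so the $\eta(X)\xi$ correction in \eqref{151} drops out), and adding the two results, the two copies of $g(h(X,\phi X),\phi\zeta)$ cancel by symmetry of $h$, leaving exactly $-g(h(X,X),\zeta)-g(h(\phi X,\phi X),\zeta)$.

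The hard part is (v), and it is where I expect the only genuine obstacle. Forming $\mathcal P_ZX-\mathcal P_XZ$ and subtracting $\mathcal P_WX$, the $(\phi X\ln f)$ terms and the shape-operator terms cancel in pairs (using $g(Z,PW)=-g(W,PZ)$ and the symmetry of $h$), but the term $g(\tilde\nabla_XPZ,W)$ inside $\mathcal P_XZ$ resists Proposition \ref{1}(ii), since $PZ$ need not be a basic lift of a field on $N$. Expanding it by metric compatibility leaves a residual derivative $X\,g(Z,PW)$. The crux is to observe that $g(Z,PW)=g(Z,\phi W)$ is a component of the fibre metric, which in the warped product scales like $f^2g_N$, so that $X\,g(Z,PW)=2(X\ln f)\,g(Z,PW)$; feeding this back collapses the residual term and yields the asserted right-hand side $2(X\ln f)g(Z,PW)$. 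Making this scaling step rigorous for the generic fibre $N$ (rather than merely in the contact $CR$ case where $PZ\in\Gamma(TN)$) is the delicate point of the whole theorem.
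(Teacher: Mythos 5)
Your handling of (i), (ii), (iv) and (vi) is fine, and your argument for (vii) is essentially the paper's own (pair $(\tilde\nabla_X\phi)X$ with $\phi\zeta$, substitute $\phi X$ for $X$, add, after disposing of the case $X=\xi$). For (iii) the paper takes a slightly different route — it subtracts $(\tilde\nabla_Z\phi)X$ from $(\tilde\nabla_X\phi)Z$, pairs with $\phi Y$ and invokes part (i) — while your direct expansion reaches the same conclusion at the same level of rigor (both arguments quietly treat $\phi X$ as if it were a basic horizontal lift, which is the same subtlety that resurfaces below). Crucially, the paper declares (i), (ii), (iv), (v) and (vi) ``trivial'' and gives no proof of (v) at all, and (v) is exactly where your proposal breaks. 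Your reduction of (v) is correct: after the cancellations, the left-hand side equals $Xg(Z,PW)$, so (v) is precisely the scaling statement $Xg(Z,PW)=2(X\ln f)\,g(Z,PW)$.

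The gap is your justification of that scaling, namely that $g(Z,PW)$ ``is a component of the fibre metric, which scales like $f^2g_N$.'' It is not: $g(Z,PW)=g(Z,\phi W)$ is a component of the pullback of the ambient fundamental $2$-form $\Phi$, and $\phi$ is an \emph{ambient} tensor. The warped-product structure forces $g(Z,W)=f^2g_N(Z,W)$ to scale like $f^2$ because $g_N$ is lifted from $N$; it imposes no constraint whatsoever on $g(Z,\phi W)$, since the restriction of $\phi$ to the fibres may vary along $N_T$. Indeed, metric compatibility of $\tilde\nabla$ together with Proposition \ref{1}(ii) gives
$$Xg(Z,PW)=2(X\ln f)g(Z,PW)+g({\mathcal P}_XW,Z)+g(h(X,Z),FW)-g(h(X,W),FZ),$$
so, in view of (vi), your scaling claim is equivalent to the identity (v) you are trying to prove: the argument is circular. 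Moreover, no formal argument of this shape can close the gap, because the statement fails for a general almost contact metric ambient: take $\tilde M^7=\mathbb{R}_t\times\mathbb{R}^2\times\mathbb{R}^4$ with metric $dt^2+g_{\mathbb{R}^2}+f(t)^2g_{\mathbb{R}^4}$, $\xi=\partial_t$, $\eta=dt$, and $\phi=0\oplus J_1\oplus J_2(t)$, where $J_1$ is the standard complex structure on $\mathbb{R}^2$ and $J_2(t)=R(t)J_0R(t)^{-1}$ is a $t$-dependent metric-compatible complex structure on $\mathbb{R}^4$. Then $M=\tilde M=N_T\times_fN$ (identity immersion, $N_T=\mathbb{R}_t\times\mathbb{R}^2$ invariant and containing $\xi$, $h=0$, $F=0$) satisfies every hypothesis of the theorem, yet $Xg(Z,PW)-2(X\ln f)g(Z,PW)=f^2g_{\mathbb{R}^4}(Z,\dot J_2(t)W)\neq 0$ whenever $[\dot RR^{-1},J_0]\neq 0$. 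So (v) genuinely requires structural input on $\tilde\nabla\phi$ (a Sasakian, Kenmotsu, cosymplectic or nearly trans-Sasakian equation, which is where the paper actually applies it); it cannot be obtained from the almost contact metric axioms and the warped-product geometry alone, and the delicate point you flagged is not a technicality but an unbridgeable one at this level of generality.
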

\begin{proof}
The assertion of statements $(i),~(ii),~(iv),~(v)$ and $(vi)$ are trivial. For statement $(iii)$, suppose that $X$ and $Z$ are taken as hypothesis. Then it is obvious that
\begin{equation}\label{bil}(\tilde \nabla_X\phi)Z=\tilde \nabla_XPZ+\tilde \nabla_XFZ-\phi\tilde \nabla_XZ.\end{equation}
Also, for $X$ and $Z$ we have 
\begin{equation}\label{bel}(\tilde \nabla_Z\phi)X=\tilde \nabla_Z\phi X-\phi\tilde \nabla_ZX.\end{equation}
By subtracting $(\ref{bel})$ from $(\ref{bil})$, we obtain 
$$(\tilde \nabla_X\phi)Z-(\tilde \nabla_Z\phi)X=\tilde \nabla_XPZ+\tilde \nabla_XFZ-\tilde \nabla_Z\phi X.$$
Taking the inner product by $\phi Y$ in the above equation, gives
$$g({\mathcal P}_XZ, \phi Y)-g({\mathcal P}_ZX, \phi Y)=-g(h(X,\phi Y), FZ).$$
Replacing $\phi Y$ by $Y$ yields 
$$g({\mathcal P}_ZX, Y)-\eta(Y)g({\mathcal P}_ZX, \xi)-g({\mathcal P}_XZ, Y)+\eta(Y)g({\mathcal P}_XZ,\xi)=$$$$g(h(X,Y), FZ)-\eta(Y)g(h(X,\xi),FZ).$$
By using $(i)$ in the above equation we derive
 $$g({\mathcal P}_ZX, Y)=\eta(Y)g({\mathcal P}_ZX, \xi).$$
Since the right hand side of the above equation vanishes identically, we obtain $(iii)$. 

For $(vii)$, if we take $X=\xi$ in the above theorem, then statement $(vii)$ holds directly. Now, for an arbitrary vector field tangential to the first factor and perpendicular to $\xi$, say $X$, we have

 $$(\tilde \nabla_X\phi)X=\tilde \nabla_X\phi X-\phi\tilde \nabla_XX.$$
First, take the inner product in the above equation with $\phi\zeta$ to get 
 $$g({\mathcal Q}_XX, \phi\zeta)=g(h(\phi X,X), \phi\zeta)-g(h(X, X), \zeta).$$
\noindent
After that, we replace $\phi X$ by $X$ in the above equation to derive
$$g({\mathcal Q}_{\phi X}\phi X, \phi \zeta)=-g(h(\phi X,X), \phi\zeta)-g(h(\phi X, \phi X), \zeta).$$
Hence $(vii)$ can be obtained by adding the above two equations.
\end{proof}

In virtue of Theorem \ref{T1} $(v)$, we get the following decisive nonexistence result in the setting of almost contact structures, which generalizes several nonexistence results in this field (see references in \cite{ssee44}. 
\begin{corollary}\label{Y1}
In both of Sasakian and cosymplectic manifolds, there is no warped product submanifolds with invariant first factor tangential to $\xi$, other than contact $CR$-warped products.
\end{corollary}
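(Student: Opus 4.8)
The plan is to specialize the general identity of Theorem \ref{T1}(v) to the two structures in question and extract from it a pointwise algebraic constraint on the second factor. Recall that for $X\in\Gamma(TN_T)$ and $Z,W\in\Gamma(TN)$ statement (v) reads
\[
g({\mathcal P}_ZX-{\mathcal P}_XZ,W)-g({\mathcal P}_WX,Z)=2\,(X\ln f)\,g(Z,PW).
\]
First I would compute the tangential operator ${\mathcal P}$ explicitly in each case. In the cosymplectic case $(\ref{152})$ gives $(\tilde\nabla_X\phi)Y=0$, so ${\mathcal P}\equiv 0$ and the left-hand side is identically zero. In the Sasakian case $(\ref{157})$ gives $(\tilde\nabla_U\phi)V=-\tilde g(U,V)\xi+\eta(V)U$; since the two factors are orthogonal and $\xi$ is tangent to $N_T$, one has $g(X,Z)=g(W,X)=0$ and $\eta(Z)=\eta(W)=0$, whence ${\mathcal P}_XZ=0$, ${\mathcal P}_ZX=\eta(X)Z$ and ${\mathcal P}_WX=\eta(X)W$.

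Substituting these into the left-hand side of (v), the two surviving terms are $\eta(X)g(Z,W)$ and $-\eta(X)g(W,Z)$, which cancel, so the left-hand side vanishes in both structures. I therefore obtain $(X\ln f)\,g(Z,PW)=0$ for all admissible $X,Z,W$. Because the warped product is assumed nontrivial (proper), $f$ is non-constant, so there is a vector field $X\in\Gamma(TN_T)$ with $X\ln f\neq 0$ on a dense open set; on that set I conclude $g(Z,PW)=0$ for all $Z,W\in\Gamma(TN)$, and by continuity this persists everywhere.

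The remaining step is to upgrade $g(Z,PW)=0$ to $PW=0$. Here I would invoke that $N_T$ is invariant together with the skew-symmetry $g(\phi U,V)=-g(U,\phi V)$, which follows from the antisymmetry of the fundamental $2$-form $\Phi(X,Y)=\tilde g(\phi X,Y)$ introduced after $(\ref{151})$. For any $X\in\Gamma(TN_T)$ and $W\in\Gamma(TN)$,
\[
g(PW,X)=g(\phi W,X)=-g(W,\phi X)=0,
\]
since $\phi X\in\Gamma(TN_T)$ is orthogonal to $W\in\Gamma(TN)$. Thus $PW$ is orthogonal to $TN_T$, i.e. $PW\in\Gamma(TN)$; combined with $g(Z,PW)=0$ for all $Z\in\Gamma(TN)$, this forces $PW=0$. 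Hence $\phi W=FW\in\Gamma(T^\perp M^n)$ for every $W\in\Gamma(TN)$, so $N$ is anti-invariant and $M^n=N_T\times_fN$ is a contact $CR$-warped product, which is exactly the assertion.

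The main obstacle, and the part deserving the most care, is the bookkeeping in the first step: one must verify that every inner product entering ${\mathcal P}_XZ$, ${\mathcal P}_ZX$ and ${\mathcal P}_WX$ either couples the two orthogonal factors or pairs a factor vector against $\xi$, so that the structure equations collapse to the clean cancellation above. Once that is in place, the passage from $g(Z,PW)=0$ to $PW=0$ is a short orthogonality argument rather than a genuine difficulty.
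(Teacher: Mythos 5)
Your proof is correct and follows essentially the same route as the paper: Corollary \ref{Y1} is stated there as an immediate consequence of Theorem \ref{T1}(v), and your specialization of ${\mathcal P}$ via \eqref{157} and \eqref{152}, the resulting cancellation on the left-hand side, and the orthogonality argument upgrading $g(Z,PW)=0$ to $PW=0$ (hence $\phi W=FW$, i.e.\ $N$ anti-invariant) supply exactly the details the paper leaves implicit. The only blemish is your claim that $X\ln f\neq 0$ on a \emph{dense} open set---non-constancy of $f$ only yields a nonempty open one---but this is immaterial, since anti-invariance of the second factor on that open set already excludes every non-CR type (e.g.\ a proper slant $N_\theta$), which is the content of the corollary.
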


In particular, this corollary implies the nonexistence of warped product submanifolds of type $N_T\times _fN_\theta$ in Sasakian and cosymplectic manifolds such that $N_\theta$ is a proper slant. On the contrary, this is not true for Kenmotsu manifolds.

Now, we prepare the following results for later use.
\begin{theorem}\label{U1}
Let $M^n=N_1\times _fN_2$ be a warped product submanifold isometrically immersed in a nearly trans-Sasakian manifold $\tilde M^{2l+1}$ such that $\xi$ is tangent to $N_1$. Then, the following hold
\begin{enumerate}
\item[(i)] $\xi \ln f=\beta;$
\item[(ii)] $g(h(\xi, Z), FZ)=-\alpha ||Z||^2,$
\end{enumerate}
for each vector field $Z$ tangent to $N_2$.
\end{theorem}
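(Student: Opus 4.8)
The plan is to extract both identities from a single specialization of the defining relation \eqref{261} of a nearly trans-Sasakian structure. First I would record two tools. From the warped-product structure of $M^n=N_1\times_f N_2$ (Proposition \ref{1}(ii) applied to $M^n$ with its induced, i.e.\ warped-product, connection $\nabla$), since $\xi$ is tangent to $N_1$ and $Z$ is tangent to $N_2$ one has $\nabla_Z\xi=(\xi\ln f)Z$; combined with the Gauss formula \eqref{3} this gives the ambient derivative $\tilde\nabla_Z\xi=(\xi\ln f)Z+h(Z,\xi)$, whose second summand is normal. The second tool is the skew-symmetry of the tensor $(\tilde\nabla_X\phi)$, namely $\tilde g((\tilde\nabla_X\phi)Y,W)=-\tilde g(Y,(\tilde\nabla_X\phi)W)$, which follows at once from the skew-symmetry of $\phi$ (a consequence of \eqref{151}) together with the metric compatibility of $\tilde\nabla$.

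Next I would put $X=Z\in\Gamma(TN_2)$ and $Y=\xi$ in \eqref{261}. Using $\eta(Z)=\tilde g(Z,\xi)=0$, $\eta(\xi)=1$ and $\phi\xi=0$, the right-hand side collapses and one obtains the master equation
\begin{equation*}
(\tilde\nabla_Z\phi)\xi+(\tilde\nabla_\xi\phi)Z=-\alpha Z-\beta\phi Z.
\end{equation*}
To prove (ii) I take the $g(\,\cdot\,,Z)$-inner product of this equation. The term $g((\tilde\nabla_\xi\phi)Z,Z)$ vanishes immediately by skew-symmetry, while for the first term I use $\phi\xi=0$ to write $(\tilde\nabla_Z\phi)\xi=-\phi\tilde\nabla_Z\xi$ and then the skew-symmetry of $\phi$ together with the expression for $\tilde\nabla_Z\xi$ above; since $h(Z,\xi)$ is normal this leaves exactly $g(h(Z,\xi),FZ)$. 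The right-hand side contributes $-\alpha\|Z\|^2$ because $g(\phi Z,Z)=0$, giving (ii).

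For (i) I take instead the $g(\,\cdot\,,\phi Z)$-inner product of the master equation. The first term yields $-(\xi\ln f)\|Z\|^2$ after using $\tilde g(\phi A,\phi B)=\tilde g(A,B)-\eta(A)\eta(B)$ with $\eta(Z)=0$ and discarding the normal part $h(Z,\xi)$, and the right-hand side gives $-\beta\|Z\|^2$ since $\|\phi Z\|^2=\|Z\|^2$. The crux, and the only step that is not a one-line manipulation, is showing that the middle term $g((\tilde\nabla_\xi\phi)Z,\phi Z)$ vanishes; here skew-symmetry alone is not enough, so I would expand $(\tilde\nabla_\xi\phi)Z=\tilde\nabla_\xi\phi Z-\phi\tilde\nabla_\xi Z$ and compute $g(\tilde\nabla_\xi\phi Z,\phi Z)=\tfrac12\,\xi\|\phi Z\|^2$ and $g(\phi\tilde\nabla_\xi Z,\phi Z)=\tilde g(\tilde\nabla_\xi Z,Z)=\tfrac12\,\xi\|Z\|^2$ by metric compatibility, so that the two coincide (via $\|\phi Z\|^2=\|Z\|^2$) and cancel. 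Equating the two sides then forces $\xi\ln f=\beta$. I expect this cancellation, rather than any structural difficulty, to be the main point to get right, since it hinges on $\eta(Z)=0$ and on the orthogonality of the factors $N_1$ and $N_2$.
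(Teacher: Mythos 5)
Your proof is correct and takes essentially the same route as the paper: both specialize \eqref{261} to the pair $(Z,\xi)$ to obtain the identical master equation (the paper's \eqref{psb}), then take the inner product with $\phi Z$ for (i) and with $Z$ for (ii), using Proposition \ref{1}(ii), the Gauss formula, and metric compatibility in the same way (your middle-term cancellation in (i) is exactly the paper's Lemma \ref{conn}). The only difference is cosmetic, in (ii): you kill $g((\tilde\nabla_\xi\phi)Z,Z)$ in one stroke via the skew-symmetry of $\tilde\nabla_\xi\phi$, whereas the paper reaches the same end by decomposing $\phi Z=PZ+FZ$ and cancelling terms, which is slightly longer but equivalent.
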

\begin{proof}
By $(\ref{261})$, it is straightforward that
\begin{equation}\label{psb}
-\phi \tilde \nabla_Z\xi+\tilde \nabla_\xi \phi Z-\phi \tilde \nabla_\xi Z=-\alpha Z-\beta \phi Z.
\end{equation}
For $(i)$, taking the inner product with $\phi Z$ in the above equation, gives
$$-2~\xi \ln f ||Z||^2+g(\tilde \nabla_\xi\phi Z,\phi Z)=-\beta ||Z||^2,$$
Equivalently,
$$-2~\xi \ln f ||Z||^2+\frac{1}{2}~\xi ||Z||^2=-\beta ||Z||^2,$$
which implies
$$-2~\xi \ln f ||Z||^2+g(\tilde \nabla_\xi Z, Z)=-\beta ||Z||^2.$$
Hence, statement $(i)$ follows from the above equation. 

Now, we take the inner product with $Z$ in $(\ref{psb})$ to derive
$$g(\tilde \nabla_\xi \phi Z,Z)+2~ g(\tilde \nabla_\xi  Z,\phi Z)=-\alpha ||Z||^2.$$
This can be written as
$$g(\tilde \nabla_\xi PZ,Z)+g(\tilde \nabla_\xi FZ,Z)+2~g(\tilde \nabla_\xi Z,PZ)+2~g(\tilde \nabla_\xi Z,FZ)=-\alpha ||Z||^2.$$
Hence, by the Gauss formula and part $(ii)$ of Proposition \ref{1}, we get
$$g(\tilde \nabla_\xi FZ,Z)+2~g(\tilde \nabla_\xi Z,FZ)=-\alpha ||Z||^2.$$
Consequently,
$$g(\tilde \nabla_\xi Z,FZ)=-\alpha ||Z||^2.$$
Statement $(ii)$ follows from the above equation by virtue of Gauss formula. This completes the proof.
\end{proof}

In the spirit of the preceding theorem, it is easy, but important, to distinguish other particular case structures. For this, we present the following table:

\begin{center}
\begin{table}[ht]
\centering 
\begin{tabular}{|c c c c c c| } 
 \hline
 $\tilde M^{2l+1}$&$\vline$ & $\xi \ln f=$ &\vline &$g(h(\xi, Z), FZ)=$&\\ [0.9ex] 
\hline\hline 
  
$$ Nearly trans-Sasakian& \vline  &$\beta$&\vline&$-\alpha ||Z||^2$& \\

\hline
Nearly $\alpha$-Sasakian & $\vline$ &0 &\vline&$-\alpha ||Z||^2$&\\

\hline
Sasakian & $\vline$ &0&\vline&$- ||Z||^2$& \\

\hline
Nearly $\beta$-Kenmotsu& $\vline$ & $\beta$&\vline&0&\\ 
 
\hline
Kenmotsu  &$\vline$  & 1&\vline&0&\\

\hline 
Nearly cosymplectic  &$\vline$ &0&\vline&0&\\

\hline 
Cosymplectic &$\vline$ &0&\vline&0&\\
\hline\hline
\end{tabular}
\caption{$\xi \ln f$ and $g(h(\xi, Z), FZ)$ for $N_1\times _fN_2$ in $\tilde M^{2l+1}$, such that $\xi$ is tangent to $N_1$ and $Z$ is tangent to $N_2$.}
\end{table}
\end{center}

Now, assume that the warped product submanifold $N_1\times _fN_2$ in Theorem \ref{U1} is mixed totally geodesic. Thus, from statement $(ii)$ of the same theorem, we have
$$\alpha ||Z||^2=0.$$
This implies that, either $N_2$ is null, or $\alpha=0$; i.e., $\tilde M^{2l+1}$ is not $\alpha$-Sasakian. Therefore, we get the following significant nonexistence result, which will be useful in inequalities of mixed totally geodesic submanifolds in almost contact manifolds.
\begin{proposition}\label{mixtgs}
There is no mixed totally geodesic warped product submanifold in nearly $\alpha$-Sasakian manifolds.
\end{proposition}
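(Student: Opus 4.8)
The plan is to derive the nonexistence directly from Theorem \ref{U1}(ii) by exploiting the mixed totally geodesic hypothesis, so that almost all of the analytic work is already done. Recall that a warped product submanifold $M^n = N_1 \times_f N_2$ is called \emph{mixed totally geodesic} when the second fundamental form $h$ vanishes on mixed pairs, that is $h(X, Z) = 0$ for every $X \in \Gamma(TN_1)$ and $Z \in \Gamma(TN_2)$. A nearly $\alpha$-Sasakian manifold is precisely the case $\beta = 0$ of a nearly trans-Sasakian structure, governed by the structural formula $(\ref{2ms1})$, with $\alpha \not\equiv 0$ for the structure to be proper. Since we have adopted the convention (forced by Theorem \ref{fund}) that $\xi$ is tangent to the first factor $N_1$, Theorem \ref{U1} applies verbatim.

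First I would argue by contradiction: suppose a proper mixed totally geodesic warped product submanifold $M^n = N_1 \times_f N_2$ exists in a nearly $\alpha$-Sasakian manifold $\tilde M^{2l+1}$. Since $\xi \in \Gamma(TN_1)$ and any $Z \in \Gamma(TN_2)$ form a mixed pair, the mixed totally geodesic hypothesis yields $h(\xi, Z) = 0$, and hence $g(h(\xi, Z), FZ) = 0$ for every $Z$ tangent to $N_2$.

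Next I would invoke Theorem \ref{U1}(ii), which in the nearly $\alpha$-Sasakian case ($\beta = 0$) reads $g(h(\xi, Z), FZ) = -\alpha ||Z||^2$. Combining this with the vanishing obtained above gives $\alpha ||Z||^2 = 0$ for every $Z \in \Gamma(TN_2)$. Because the structure is nearly $\alpha$-Sasakian with $\alpha \neq 0$, it follows that $||Z||^2 = 0$ for all $Z$ tangent to $N_2$, forcing $N_2$ to be degenerate (null), which contradicts $M^n$ being a proper warped product. This establishes the nonexistence.

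The argument is short precisely because the substantive computation is already packaged inside Theorem \ref{U1}(ii); the main difficulty is conceptual rather than computational. The point requiring care is to make explicit that $\xi$ paired with a fiber vector $Z$ is a genuine mixed pair, so that the mixed totally geodesic condition really annihilates the left-hand side, together with the observation that $\alpha \neq 0$ is built into the very definition of a proper nearly $\alpha$-Sasakian structure. The main obstacle, then, is simply ensuring the hypotheses align: confirming that $\xi$ is tangent to $N_1$ (so that Theorem \ref{U1} is applicable) and that $N_2$ is nontrivial (so that ``null $N_2$'' is a genuine contradiction rather than an admissible degenerate case).
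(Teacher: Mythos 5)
Your proposal is correct and follows essentially the same route as the paper: the paper's own argument (given in the paragraph preceding the proposition) likewise applies Theorem \ref{U1}(ii) with $\beta=0$, uses the mixed totally geodesic hypothesis to kill $g(h(\xi,Z),FZ)$, and concludes $\alpha\|Z\|^2=0$, forcing either $\alpha=0$ or $N_2$ null. Your write-up merely makes explicit the two points the paper leaves implicit --- that $(\xi,Z)$ is a genuine mixed pair and that $\alpha\neq 0$ is part of the nearly $\alpha$-Sasakian hypothesis --- which is a welcome clarification but not a different proof.
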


In another line of thought, one can easily verify the following lemma.
\begin{lemma}\label{conn}
Let $N_1\times _fN_2$ be a warped product submanifold in almost contact manifolds $\tilde M^{2l+1}$ such that $\xi$ is tangent to the first factor. Then, $g((\tilde \nabla_\xi \phi)Z, \phi Z)=0$ for every $Z\in \Gamma (TN_2).$
\end{lemma}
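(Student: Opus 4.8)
The plan is to expand $(\tilde\nabla_\xi\phi)Z$ directly from its definition \eqref{42} and then evaluate the required inner product term by term, using only metric compatibility of the Levi-Civita connection $\tilde\nabla$ together with the compatibility relation $\tilde g(\phi X,\phi Y)=\tilde g(X,Y)-\eta(X)\eta(Y)$ from \eqref{151}. First I would write
\[
g\bigl((\tilde\nabla_\xi\phi)Z,\phi Z\bigr)
= g(\tilde\nabla_\xi\phi Z,\phi Z)-g(\phi\tilde\nabla_\xi Z,\phi Z),
\]
so the problem splits into two scalar quantities that I can treat separately.

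For the first term, since $\tilde\nabla$ is a metric connection, I would use $\xi\,g(\phi Z,\phi Z)=2\,g(\tilde\nabla_\xi\phi Z,\phi Z)$, which gives $g(\tilde\nabla_\xi\phi Z,\phi Z)=\tfrac12\,\xi\,||\phi Z||^2$. For the second term, applying the compatibility identity in \eqref{151} yields
\[
g(\phi\tilde\nabla_\xi Z,\phi Z)=g(\tilde\nabla_\xi Z,Z)-\eta(\tilde\nabla_\xi Z)\,\eta(Z),
\]
and by metric compatibility once more $g(\tilde\nabla_\xi Z,Z)=\tfrac12\,\xi\,||Z||^2$.

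The crux is the observation that $\xi$ is tangent to the first factor $N_1$ while $Z$ is tangent to $N_2$; since the leaf and fiber distributions are mutually orthogonal in the warped product metric (Corollary \ref{2}), we have $\eta(Z)=g(Z,\xi)=0$. This single fact collapses both pieces: it annihilates the correction $\eta(\tilde\nabla_\xi Z)\,\eta(Z)$ in the second term, and it forces $||\phi Z||^2=g(Z,Z)-\eta(Z)^2=||Z||^2$ in the first. Hence both terms reduce to $\tfrac12\,\xi\,||Z||^2$, and their difference is zero, which is precisely the assertion.

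I do not expect a genuine obstacle here, as the computation is short and essentially mechanical once the definitions are unwound. The only point demanding care is the bookkeeping of the $\eta$-terms produced by the almost contact identities: one must verify that every such term carries a factor of $\eta(Z)$ and therefore vanishes under the orthogonality of the two factor distributions, leaving nothing but the two equal metric-derivative contributions that cancel.
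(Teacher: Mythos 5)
Your proof is correct and matches the paper's approach: the paper states this lemma without an explicit proof (``one can easily verify''), but the identical computation---metric compatibility giving $g(\tilde\nabla_\xi\phi Z,\phi Z)=\tfrac{1}{2}\,\xi||\phi Z||^2=\tfrac{1}{2}\,\xi||Z||^2=g(\tilde\nabla_\xi Z,Z)$, using $||\phi Z||=||Z||$ because $\eta(Z)=0$---is precisely the step carried out inside the proof of Theorem \ref{U1}. One small remark: the orthogonality $\eta(Z)=g(Z,\xi)=0$ follows directly from the warped product metric $\tilde g=g_{N_1}+f^2g_{N_2}$ rather than from Corollary \ref{2} (which asserts total geodesy and umbilicity of the factors), but this citation slip does not affect the validity of your argument.
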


As another important consequence of Theorem \ref{T1}, we have the following proposition:
\begin{proposition}\label{X1}
For any warped product submanifold $M^n=N_T\times _fN$ of nearly trans-Sasakian manifolds with $\xi$ tangent to the first factor, the followings are true
\begin{enumerate}
\item[(1)] $g(h(X,Y), FZ)=0;$
\item[(2)] $g(h(X,X), \zeta)+ g(h(\phi X,\phi X), \zeta)=0;$
\item [(3)] $ g(h(X,Z), FZ)+ \alpha \eta(X) ||Z||^2=-(\phi X\ln f) ||Z||^2,$
\end{enumerate}
where the vector fields $X,~Y$ are tangent to the first factor, $Z$ is tangent to the second and $\zeta$ is tangent to the normal subbundle $\nu$.
\end{proposition}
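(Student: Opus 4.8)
The plan is to obtain all three identities by pairing the appropriate statement of Theorem \ref{T1} with the structure equation (\ref{261}), and by exploiting throughout that, since $\xi$ is tangent to the first factor $N_T$, one has $\eta(Z)=g(\xi,Z)=0$ and $g(X,Z)=0$ for every $X\in\Gamma(TN_T)$ and $Z\in\Gamma(TN)$, together with the skew-symmetry $g(\phi U,V)=-g(U,\phi V)$ that follows from (\ref{151}). These orthogonality relations are precisely what make the $\alpha$- and $\beta$-terms of (\ref{261}) collapse.

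The common engine for (1) and (3) is the symmetrized tangential relation. Evaluating (\ref{261}) on the pair $(X,Z)$ and using $\eta(Z)=0$, $g(X,Z)=0$ reduces its right-hand side to $-\alpha\eta(X)Z-\beta\eta(X)\phi Z$; taking tangential components gives $\mathcal{P}_XZ+\mathcal{P}_ZX=-\alpha\eta(X)Z-\beta\eta(X)PZ$. For (1), Theorem \ref{T1}(i) rephrases the claim as $g(\mathcal{P}_XZ,Y)=0$. Pairing the symmetrized relation with $Y\in\Gamma(TN_T)$ kills the right-hand side, since $g(Z,Y)=0$ and $g(PZ,Y)=g(\phi Z,Y)=-g(Z,\phi Y)=0$ (as $\phi Y\in\Gamma(TN_T)$); hence $g(\mathcal{P}_XZ,Y)=-g(\mathcal{P}_ZX,Y)$, and Theorem \ref{T1}(iii) forces $g(\mathcal{P}_ZX,Y)=0$. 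For (3), Theorem \ref{T1}(ii) lets me replace $g(h(X,Z),FZ)$ by $g(\mathcal{P}_ZX,Z)-(\phi X\ln f)||Z||^2$, so the claim reduces to $g(\mathcal{P}_ZX,Z)=-\alpha\eta(X)||Z||^2$. Pairing the symmetrized relation with $Z$ and using $g(PZ,Z)=g(\phi Z,Z)=0$ yields $g(\mathcal{P}_XZ,Z)+g(\mathcal{P}_ZX,Z)=-\alpha\eta(X)||Z||^2$, while Theorem \ref{T1}(vi) with $W=Z$ gives $g(\mathcal{P}_XZ,Z)=0$, which isolates the desired value.

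For (2), I would invoke Theorem \ref{T1}(vii) and show that its left-hand side vanishes. Putting both slots equal in (\ref{261}) gives $(\tilde\nabla_X\phi)X=\alpha(||X||^2\xi-\eta(X)X)-\beta\eta(X)\phi X$, whose right-hand side lies entirely in $TN_T\subseteq TM^n$, because $\xi,X\in\Gamma(TN_T)$ and $\phi X\in\Gamma(TN_T)$ by invariance of the first factor; hence the normal part $\mathcal{Q}_XX$ vanishes. Replacing $X$ by $\phi X$, and using $\eta(\phi X)=0$, makes the right-hand side a multiple of $\xi$, so $\mathcal{Q}_{\phi X}\phi X=0$ as well. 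Theorem \ref{T1}(vii) then reads $0=-g(h(X,X),\zeta)-g(h(\phi X,\phi X),\zeta)$, which is (2).

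None of these steps presents a genuine analytic difficulty; the argument is essentially bookkeeping. The one point that requires care, and which I view as the only real obstacle, is the disciplined use of the orthogonality $g(X,Z)=\eta(Z)=0$ together with the invariance $\phi(TN_T)\subseteq TN_T$ (guaranteeing that $\phi X,\phi Y$ stay tangent and orthogonal to $N$): these are exactly what force the structure terms to degenerate into multiples of $\xi$ or to annihilate under pairing. The conceptual content is then simply to recognize which single part of Theorem \ref{T1} removes the residual $\mathcal{P}$- or $\mathcal{Q}$-term in each case, namely (iii) for (1), the vanishing of the normal components for (2), and (vi) for (3).
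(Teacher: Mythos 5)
Your proof is correct and, for statements (1) and (3), it is exactly the paper's argument with the omitted details filled in: the paper's proof simply cites Theorem \ref{T1}(i),(iii) for (1) and Theorem \ref{T1}(ii),(vi) for (3), and the glue you supply --- the symmetrized tangential relation $\mathcal{P}_XZ+\mathcal{P}_ZX=-\alpha\eta(X)Z-\beta\eta(X)PZ$ extracted from \eqref{261}, plus the orthogonality and invariance bookkeeping that kills the right-hand side when paired with $Y$ or $Z$ --- is precisely what those citations presuppose. The only divergence is statement (2): rather than invoking Theorem \ref{T1}(vii) as you do, the paper re-runs the computation from scratch, applying \eqref{261} with both slots equal to $X$, taking the inner product with $\zeta$, replacing $X$ by $\phi X$, and combining the two resulting identities using $h(X,\xi)=0$. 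Since the paper's proof of (vii) is exactly the almost-contact half of that computation, your route --- noting that \eqref{261} forces $\mathcal{Q}_XX=0$ and $\mathcal{Q}_{\phi X}\phi X=0$ (the right-hand sides being tangent by invariance of $N_T$ and $\eta(\phi X)=0$) and then reading (2) off (vii) --- is the same mathematics packaged more economically; if anything, it makes better use of the machinery Theorem \ref{T1} was set up to provide.
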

\begin{proof}
Statement $(1)$ follows from $(i)$ and $(iii)$ of Theorem \ref{T1}, while $(3)$ is a consequence of $(vi)$ and $(ii)$ of the same theorem. For statement $(2)$ we apply the nearly trans-Sasakian structure for the vector fields $X$ and $\xi$ to obtain the following 
$$2\tilde \nabla_X\phi X= \alpha \biggl(2g(X,X)\xi -2 \eta(X)X\biggr)-2\beta \eta(X)\phi X.$$ 
Taking the inner product with $\zeta$ gives
 $$g(h(X, \phi X), \zeta)+g(\phi h(X,X)\zeta)=0.$$
Replacing $X$ by $\phi X$ gives
$$g(h(-X+\eta(X)\xi, \phi X), \zeta)+g(\phi h(\phi X,\phi X)\zeta)=0.$$
By these two equations and the fact $h(X, \xi)=0$, we obtain the result.
\end{proof}

By means of Propositions \ref{mixtgs} and \ref{X1}, one can easily show that a mixed totally geodesic contact $CR$-warped product submanifold is indeed trivial in both Sasakian and cosymplectic manifolds. Whereas such submanifolds do exist in Kenmotsu manifolds, this is due to the fact $\xi \ln f=1$ for all warped product submanifolds of Kenmotsu manifolds when $\xi$ is tangent to the first factor.

In the sequel, we prove necessary and sufficient conditions for a contact $CR$-submanifold to be locally contact $CR$-warped product in nearly trans-Sasakian manifolds. For long time, mathematicians had have interest to find an analogue of the classical de Rham theorem to warped products, a result was proved by S. Hiepko \cite{nfds}. First, let us recall this result: Let ${\mathcal H}$ be a distribution in the tangent bundle of a Riemannian manifold $M^n$ and let ${\mathcal H}^\perp$ be its orthogonal complementary distribution. Assume that the two distributions are both involutive and the integral manifolds of ${\mathcal H}$ (resp. ${\mathcal H}^\perp$) are extrinsic spheres (resp. totally geodesic). Then, $M^n$ is locally isometric to a warped product $N_1\times _fN_2$. Moreover, if $M^n$ is simply connected and complete, there exists a global isometry of $M^n$ with a warped product. Using this fundamental method we present the following characterization theorem which has been recently published in \cite{zolo}.

\begin{theorem}\label{267}
Every contact $CR$-submanifold $M^n$ of a nearly trans-Sasakian manifold $\tilde M^{2l+1}$ with an involutive distribution $\mathfrak{D}_\perp$ is locally a contact CR-warped product, if and only if the shape operator of $M^n$ satisfies
\begin{equation}\label{1065}
A_{\phi W}X=- (\phi X \mu) W-\alpha \eta  (X) W,~~~~X\in \mathfrak{D}_T\oplus \langle \xi \rangle,~~~W\in \mathfrak{D}_\perp,
\end{equation}
for a smooth function $\mu$ on $M^n$, satisfying $V(\mu) =0$ for each $V\in \mathfrak{D}_\perp.$
\end{theorem}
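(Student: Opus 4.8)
The plan is to prove Theorem~\ref{267}, a characterization of when a contact $CR$-submanifold in a nearly trans-Sasakian manifold is locally a contact $CR$-warped product. This is an if-and-only-if statement, so I would treat the two directions separately. The natural framework is Hiepko's theorem (recalled just before the statement): to realize $M^n$ as a local warped product $N_T \times_f N_\perp$, one must show that the invariant distribution $\mathfrak{D}_T \oplus \langle \xi \rangle$ is integrable with \emph{totally geodesic} leaves, and that the anti-invariant distribution $\mathfrak{D}_\perp$ is integrable (which is assumed in the hypothesis) with leaves that are \emph{extrinsic spheres}. The warping function will be $f = e^\mu$, so that $\phi X \ln f = \phi X \mu$, matching the form of the shape-operator condition \eqref{1065}.

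For the forward direction, assume $M^n = N_T \times_f N_\perp$ is a contact $CR$-warped product. Here I would set $\mu = \ln f$ and verify \eqref{1065} by a direct computation. The key inputs are the results already established in this paper: by \eqref{5} one has $g(A_{\phi W}X, Y) = g(h(X,Y), \phi W)$, and for the various slots of $X, Y$ I would expand using the nearly trans-Sasakian structure \eqref{261} together with Theorem~\ref{T1} and Proposition~\ref{X1}. Specifically, pairing $A_{\phi W}X$ against a test field $Z \in \mathfrak{D}_\perp$ should reproduce the $-(\phi X \mu)W$ term via the warping (using something of the shape of Theorem~\ref{T1}(ii)/Proposition~\ref{X1}(3)), while pairing against the $\xi$-direction produces the $-\alpha \eta(X)W$ correction from the $\alpha$-part of the structure tensor (compare Theorem~\ref{U1}(ii)); pairing against $\mathfrak{D}_T$ should vanish by the invariance of $\mathfrak{D}_T$ and Proposition~\ref{X1}(1). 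The condition $V(\mu)=0$ for $V \in \mathfrak{D}_\perp$ is exactly the statement that $f$ is a function on the first factor $N_T$, which is built into the warped-product structure.

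For the converse, assume \eqref{1065} holds and aim to produce the Hiepko data. First I would show that the leaves of $\mathfrak{D}_T \oplus \langle \xi \rangle$ are totally geodesic in $M^n$: this amounts to checking $g(\nabla_X Y, W) = 0$ for $X, Y \in \mathfrak{D}_T \oplus \langle \xi \rangle$ and $W \in \mathfrak{D}_\perp$, which by the Weingarten/Gauss machinery and \eqref{5} is controlled by $g(h(X,Y),\phi W) = g(A_{\phi W}X, Y)$; feeding in \eqref{1065} and using $g(W,Y)=0$ should force this to vanish. Next, with $\mathfrak{D}_\perp$ already assumed involutive, I would verify that its integral manifolds are extrinsic spheres, i.e.\ totally umbilical with parallel mean curvature vector in the normal bundle of the leaf. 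The umbilicity and the identification of the mean curvature vector as (a gradient related to) $-\nabla \mu$ should again follow from \eqref{1065} by computing $g(\nabla_W Z, X)$ for $W, Z \in \mathfrak{D}_\perp$ and recovering the ambient gradient of $\mu$; the parallelism is exactly where the hypothesis $V(\mu)=0$ for $V \in \mathfrak{D}_\perp$ is needed, ensuring the mean curvature has no component along the leaf. Once both distributions satisfy Hiepko's hypotheses, his theorem yields the local warped-product decomposition, and a final check identifies the warping function as $f = e^\mu$.

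\textbf{The main obstacle} I anticipate is the converse, specifically the extrinsic-sphere verification for the $\mathfrak{D}_\perp$-leaves: showing both total umbilicity \emph{and} parallelism of the mean curvature vector requires carefully disentangling the tangential and normal components of $\tilde\nabla_W Z$ and correctly tracking the $\alpha$- and $\beta$-contributions from the nearly trans-Sasakian structure \eqref{261}, where the non-symmetric ``nearly'' structure (only the symmetrized covariant derivative of $\phi$ is prescribed) makes individual derivatives harder to isolate than in the Sasakian or Kenmotsu cases. The role of the hypothesis $V(\mu)=0$ in securing parallelism is the delicate point, and I would allocate most of the effort there.
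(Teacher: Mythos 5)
Your skeleton for the converse (Hiepko's theorem, with $\mathfrak{D}_T\oplus\langle\xi\rangle$ supplying totally geodesic leaves and $\mathfrak{D}_\perp$ supplying extrinsic spheres, warping function $f=e^\mu$) is the intended one: the paper itself gives no proof of Theorem~\ref{267} beyond recalling Hiepko's result and citing \cite{zolo}, so the comparison is against that intended route. Your forward direction is sound and is essentially already packaged in the paper's machinery: Proposition~\ref{X1}(1) kills the $\mathfrak{D}_T\oplus\langle\xi\rangle$-component of $A_{\phi W}X$, and Proposition~\ref{X1}(3), polarized, yields $A_{\phi W}X=-(\phi X\ln f)W-\alpha\eta(X)W$, with $V(\ln f)=0$ for $V\in\mathfrak{D}_\perp$ automatic from the warped-product metric.

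The genuine gap is in the converse, at the step you dispatch most quickly. You claim $g(\nabla_XY,W)=0$ for $X,Y\in\mathfrak{D}_T\oplus\langle\xi\rangle$, $W\in\mathfrak{D}_\perp$ is ``controlled by'' $g(h(X,Y),\phi W)=g(A_{\phi W}X,Y)$ and hence vanishes by \eqref{1065}. This conflates a tangential quantity with a normal one: $\nabla_XY$ is tangent to $M^n$ while $h(X,Y)$ and $\phi W$ are normal, and the only bridge between them is the structure tensor. Carrying the computation out correctly (using $\eta(W)=0$),
\[
g(\nabla_XY,W)=g(\phi\tilde\nabla_XY,\phi W)=g(h(X,\phi Y),\phi W)-g\bigl((\tilde\nabla_X\phi)Y,\phi W\bigr),
\]
and \eqref{1065} kills only the first term, since $g(A_{\phi W}X,\phi Y)=-\bigl(\phi X\mu+\alpha\eta(X)\bigr)g(W,\phi Y)=0$. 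In a Sasakian or Kenmotsu ambient the leftover term vanishes because $(\tilde\nabla_X\phi)Y$ is given pointwise by \eqref{157} resp.\ \eqref{155}; but in a \emph{nearly} trans-Sasakian manifold only the symmetrization $(\tilde\nabla_X\phi)Y+(\tilde\nabla_Y\phi)X$ is prescribed by \eqref{261}, and pairing \eqref{261} with $\phi W$ makes all structure terms vanish. So what \eqref{1065} actually gives you is only
\[
g(\nabla_XY,W)+g(\nabla_YX,W)=0,\qquad\text{i.e.}\qquad g(\nabla_XY,W)=\tfrac{1}{2}\,g([X,Y],W).
\]
Hence, under \eqref{1065}, total geodesy of the leaves of $\mathfrak{D}_T\oplus\langle\xi\rangle$ is exactly equivalent to the integrability of $\mathfrak{D}_T\oplus\langle\xi\rangle$, i.e.\ to controlling the antisymmetric (in $X,Y$) part of the normal component of $(\tilde\nabla_X\phi)Y$ --- which the ``nearly'' structure equation does not hand you. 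Your plan never addresses integrability of the invariant distribution (the hypothesis only grants involutivity of $\mathfrak{D}_\perp$), so the converse is incomplete precisely there: the difficulty you correctly attribute to the non-symmetric ``nearly'' structure sits at this step just as much as at the extrinsic-sphere verification where you allocated all your effort, and it must be resolved by a separate argument (additional lemmas on the tangential/normal components ${\mathcal P}_XY$, ${\mathcal Q}_XY$ of the type the cited proof in \cite{zolo} supplies).
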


Observe that the above theorem generalizes many related recent results, for example contact $CR$-warped product of cosymplectic, Sasakian and Kenmotsu manifolds can be characterized in a similar way as above (see, for example \cite{wenal}).

\section{Research problems based on The Results of Previous Sections}

Due to the results of this paper, we hypothesize a pair of open problems.

Firstly, 
\begin{problem}\label{ama1}
Construct discrete examples of contact $CR$-warped product submanifolds of Sasakian manifolds. 
\end{problem}

Secondly, we ask:
\begin{problem}\label{pqm2}
Construct a solid examples of nearly trans-Sasakian manifolds and contact $CR$-warped product submanifolds of such manifolds.
\end{problem}

\vskip.15in
\begin{acknowledgements}
The first author (Abdulqader Mustafa) would like to thank the Palestine Technical University Kadoori, PTUK, for its supports to accomplish this work.
\end{acknowledgements}

\bigskip

{\bfseries{Data availability Statement:}} Not applicable.
 
\bigskip
{\bfseries{Statements and Declarations:}} The authors have no relevant financial or non-financial interests to disclose. The authors did not receive support from any organization for the submitted work.

\end{document}